\theoremstyle{plain}
\newtheorem{thm}{Theorem}[section]
\newtheorem*{thm*}{Theorem}
\newtheorem*{cor*}{Corollary}
\newtheorem{prop}[thm]{Proposition}
\newtheorem{lem}[thm]{Lemma}
\newtheorem{cor}[thm]{Corollary}
\newtheorem*{claim*}{Claim}
\theoremstyle{definition}
\newtheorem{defn}[thm]{Definition}
\newtheorem{ex}[thm]{Example}
\newtheorem{prob}[thm]{Problem}
\newtheorem{rem}[thm]{Remark}
\newtheorem*{ac}{Acknowledgments}
\theoremstyle{remark}
\numberwithin{equation}{thm}
\def\Min{\operatorname{Min}}
\def\Max{\operatorname{Max}}
\def\Spec{\operatorname{Spec}}
\def\m{\mathfrak m}
\newcommand{\rme}{\mathrm{e}}
\newcommand{\rmH}{\mathrm{H}}
\newcommand{\rmQ}{\mathrm{Q}}
\newcommand{\calF}{\mathcal{F}}
\newcommand{\calS}{\mathcal{S}}
\newcommand{\calX}{\mathcal{X}}
\newcommand{\fka}{\mathfrak{a}}
\newcommand{\fkb}{\mathfrak{b}}
\newcommand{\fkp}{\mathfrak{p}}
\newcommand{\mapright}[1]{%
\smash{\mathop{%
\hbox to 1cm{\rightarrowfill}}\limits^{#1}}}
\newcommand{\mapleft}[1]{%
\smash{\mathop{%
\hbox to 1cm{\leftarrowfill}}\limits_{#1}}}
\def\depth{\operatorname{depth}}
\def\Ass{\operatorname{Ass}}
\def\Spec{\operatorname{Spec}}
\title[Decomposition of integrally closed ideals in Arf rings]{Decomposition of integrally closed ideals in Arf rings}
\author[R. Isobe]{Ryotaro Isobe}
\address{Department of Mathematics and Informatics, Graduate School of Science, Chiba University, Yayoi-cho 1-33, Inage-ku, Chiba, 263-8522, Japan}
\email{r.isobe.math@gmail.com}
\thanks{2020 {\em Mathematics Subject Classification.} 13A15, 13B22, 13B30, 13H10.}
\thanks{{\em Key words and phrases.} Arf ring, integrally closed ring, integrally closed ideal, stable ideal}
\thanks{The author was partially supported by JSPS KAKENHI Grant Number JP20J10517 and 21K13767.}
\begin{document}

\maketitle

\setlength{\baselineskip} {15.2pt}

\begin{abstract}
This study investigates the structure of Arf rings.
From the perspective of ring extensions, a decomposition of integrally closed ideals is given. Using this, we present a kind of  their prime ideal decomposition in Arf rings, and determine their structure in the case where both $R$ and the integral closure $\overline{R}$ are local rings.

\end{abstract}



\section{Introduction}\label{intro}
The purpose of this paper is to give a decomposition of integrally closed ideals that extends the property of discrete valuation rings to Arf rings. 

In 1971, J. Lipman introduced the notion of Arf rings for Cohen-Macaulay semi-local rings $R$ satisfying that every localization $R_M$ at a maximal ideal $M$ is dimension one.
Arf rings were originally studied in the classification of certain singular points of plane curves by C. Arf \cite{Arf}, and Lipman generalized them by extracting the essence of the rings.
A typical example of an Arf ring is a Cohen-Macaulay local ring of dimension one with multiplicity at most two {\cite[Example, page 664]{L}}. It is also known that a semi-normal Cohen-Macaulay local ring of dimension one is Arf (see {\cite[Theorem 4.4]{C}}, {\cite[Lemma 4.1]{L}}, \cite{GT}).
Using this notion, it was proved that if $R$ is a one-dimensional complete Noetherian local domain with an algebraically closed residue field of characteristic zero and if $R$ is {\it saturated} in the sense of O. Zariski \cite{Z}, then $R$ has minimal multiplicity (i.e., the embedding dimension of $R$ equals the multiplicity of $R$). This result depends on the fact that such a ring is an Arf ring.
In \cite{L}, Arf rings are characterized in terms of the stability of integrally closed open ideals and the behavior of the blow-ups (\cite[Theorem 2.2]{L}), which in turn tells us that every Arf local ring has minimal multiplicity.

Additionally, the Arf property is strongly related to the strict closedness of rings, which was given in the same paper \cite{L}.  
For the sake of simplicity, let $R$ be a Cohen-Macaulay local ring of dimension one.  
As is mentioned in \cite{L}, it was conjectured that $R$ is an Arf ring if and only if $R=R^*$, where $R^*$ is an intermediate ring between $R$ and the integral closure $\overline{R}$, {\it the strict closure of $R$}, consisting of those elements $\alpha$ in $\overline{R}$ such that $\alpha \otimes 1=1\otimes \alpha$ in $\overline{R}\otimes_R \overline{R}$.
This conjecture was solved in \cite{L} when $R$ contains a field ({\cite[Proposition 4.5, Theorem 4.6]{L}}), and was settled in \cite{C} for its general case ({\cite[Theorem 4.4]{C}}).
The core of the theory is that the ring structure is explored in connection with the behavior of ring extensions. The reader may consult with \cite{L, RGGB, AS, CCGT, C} about further study of Arf rings and strictly closed rings.

Recently, E. Celikbas, O. Celikbas, C. Ciuperc\u{a}, N. Endo, S. Goto, this author, and N. Matsuoka \cite{C} introduced the notion of weakly Arf rings by weakening the defining conditions of  Arf rings, and extended the theory over arbitrary commutative rings. 
It is known by \cite[Corollary 4.6]{C} that if $R$ is a Noetherian ring that satisfies the Serre's $(S_2)$ condition and $ {\rm ht}_R M\ge 2$ for every $M\in\Max R$, then $R$ is a weakly Arf ring if and only if $R_{\fkp}$  is an Arf ring for every $\fkp \in \Spec R$ with ${\rm ht}_R \fkp=1$, which is also equivalent to $R=R^*$.
Thus,  in this assumption, because $R=\overline{R}$ if and only if $R_{\fkp}$  is a discrete valuation ring (DVR) for every $\fkp \in \Spec R$ with ${\rm ht}_R \fkp=1$ (i.e. $R$ is normal), we can develop the theory of weakly Arf rings parallel to that of integrally closed rings by replacing a DVR with an Arf ring, and the integral closure $\overline{R}$ with the strict closure $R^*$. 
Hence, in this study, to build the foundation for this theory,  we explore the structure of Arf rings compared with the structure of DVRs. 
The main result of this paper is stated as follows:

\begin{thm}[Corollary \ref{5.6}]
Let $R$ be an Arf ring, and let $I$ be an integrally closed ideal in $R$ that contains a non-zerodivisor on $R$.
Then, we can construct a certain tower $R=R_{(I, 0)}\subseteq R_{(I, 1)}\subseteq \cdots \subseteq R_{(I, n)}$ of Arf rings in $\overline{R}$ and an integrally closed ideal $I_i$ in $R_{(I, i)}$ for each $0\le i \le n$, such that $I=\sqrt{I_0}\sqrt{I_1}\cdots\sqrt{I_n}$, where $I_0=I$.
\end{thm}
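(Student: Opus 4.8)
The plan is to prove the corollary by induction, isolating a single ``peeling'' step that removes one radical factor at the cost of enlarging the ground ring. Throughout I work with the normalized discrete valuations $v_1,\dots,v_r$ attached to the maximal ideals of $\overline R$, and use the standard description of integrally closed open ideals: such an ideal $J$ of a ring $S$ with $R\subseteq S\subseteq\overline R$ is recovered from its values $v_j(J)=\min_{x\in J}v_j(x)$ by $J=J\overline R\cap S$, where $J\overline R=\prod_j\m_j^{\,v_j(J)}$. The first observation is that $\sqrt I$ is itself an integrally closed ideal containing a non-zerodivisor: if $x^m+a_1x^{m-1}+\cdots+a_m=0$ with $a_k\in(\sqrt I)^k\subseteq\sqrt I$, then $x^m\in\sqrt I$ and hence $x\in\sqrt I$, while $\sqrt I\supseteq I$ meets the non-zerodivisors. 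Thus, $R$ being Arf, Lipman's characterization (\cite[Theorem 2.2]{L}) makes $\sqrt I$ \emph{stable}: $(\sqrt I)^2=b\,\sqrt I$ for some non-zerodivisor $b\in\sqrt I$, and $R_1:=R[\sqrt I/b]=\sqrt I\,b^{-1}$ is a ring with $\sqrt I\,R_1=bR_1$. I record that $R_1$ is again an Arf ring with $R\subseteq R_1\subseteq\overline R$, which is what keeps the induction inside the required class.

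The heart of the argument is the one-step decomposition
\[
I=\sqrt I\cdot I_1,\qquad I_1:=I\,b^{-1}\subseteq R_1 .
\]
To prove it I first claim $I\sqrt I=bI$. Since $v_j(b)=v_j(\sqrt I)=\min\{v_j(I),1\}$, the two sides have identical extensions to $\overline R$; moreover $bI$ is integrally closed (multiplication by the non-zerodivisor $b$ preserves integral closure), and $I\sqrt I$ is integrally closed because in an Arf ring the product of integrally closed ideals is again integrally closed. Two integrally closed ideals with the same extension to $\overline R$ coincide, so $I\sqrt I=bI$; equivalently $I\,R_1=I\sqrt I\,b^{-1}=I$. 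Consequently $I_1=Ib^{-1}$ is a genuine $R_1$-ideal and $\sqrt I\cdot I_1=(\sqrt I\,b^{-1})I=R_1I=I$. Finally $I_1$ is integrally closed in $R_1$: dividing the identity $I=I\overline R\cap R$ by $b$ gives $I_1=I_1\overline R\cap(Rb^{-1})$, and intersecting with $R_1\subseteq Rb^{-1}$ yields $I_1=I_1\overline R\cap R_1$, which is exactly integral closedness in $R_1$ (note $\overline{R_1}=\overline R$). I expect this step, and specifically the input that products of integrally closed ideals stay integrally closed, to be the main obstacle, since it is where the Arf hypothesis does its essential work.

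It remains to iterate and to see that the process stops. Applying the one-step construction to the pair $(R_1,I_1)$ produces $R_2:=R_1[\sqrt{I_1}/b_1]$ and $I_2:=I_1b_1^{-1}$, and repeating gives a tower $R=R_{(I,0)}\subseteq R_{(I,1)}\subseteq\cdots\subseteq\overline R$ of Arf rings together with integrally closed ideals $I_i\subseteq R_{(I,i)}$ satisfying $I_i=\sqrt{I_i}\cdot I_{i+1}$. All these products take place among $R$-submodules of $\overline R$, so they are associative and may be telescoped:
\[
I=\sqrt{I_0}\,I_1=\sqrt{I_0}\sqrt{I_1}\,I_2=\cdots=\sqrt{I_0}\sqrt{I_1}\cdots\sqrt{I_{i}}\,I_{i+1}.
\]
For termination I use the strictly decreasing invariant $\ell_{\overline R}(\overline R/I_i\overline R)$, which is finite because $I_i$ contains a non-zerodivisor: passing from $I_i$ to $I_{i+1}=I_ib_i^{-1}$ lowers $v_j(I_i\overline R)$ by one at every index $j$ with $v_j(I_i)\ge1$, hence strictly decreases the length as long as $I_i$ is a proper ideal. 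Therefore after finitely many steps one reaches $I_n$ with $v_j(I_n)\le1$ for all $j$; then $\sqrt{I_n}$ and $I_n$ have the same extension to $\overline R$ and are both integrally closed, so $I_n=\sqrt{I_n}$. Substituting this into the telescoped identity gives $I=\sqrt{I_0}\sqrt{I_1}\cdots\sqrt{I_n}$ with $I_0=I$, as desired.
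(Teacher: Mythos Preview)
Your strategy—peel off $\sqrt I$, pass to $I_1=Ib^{-1}$ in the Arf ring $R_1=\sqrt I:\sqrt I$, iterate—is exactly the paper's. The gap is in your justification of the key identity $I\sqrt I=bI$ (equivalently $IR_1=I$): both premises you invoke are false. Take the Arf ring $R=k[[t^3,t^7,t^8]]$ (successive blow-ups $k[[t^3,t^4,t^5]]$ and $k[[t]]$, each of minimal multiplicity) and $I=\m=(t^3,t^7,t^8)$. Then $\m$ is integrally closed, but $\m^2=t^3\m=(t^6,t^{10},t^{11})$ has integral closure $(t^6,t^7,t^8)\supsetneq\m^2$. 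Thus neither ``products of integrally closed ideals are integrally closed in Arf rings'' nor ``multiplication by a non-zerodivisor preserves integral closedness'' holds, and your argument for $I\sqrt I=bI$ collapses already at $I=\m$. The paper obtains $IR_1=I$ without either claim: from $I:I=(R:I)\cap\overline R$ (Lemma~\ref{3.1}) and $I\subseteq\sqrt I$ one gets $R_1=\sqrt I:\sqrt I\subseteq I:I$, so $I$ is automatically an $R_1$-ideal (Proposition~\ref{3.2}); then $I=\sqrt I\cdot(I:\sqrt I)$ with $I:\sqrt I=I/b\in\calX_{R_1}$ follows (Theorem~\ref{3.3}).

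A secondary issue: your termination via $\ell_{\overline R}(\overline R/I_i\overline R)$ and the valuation description of integrally closed ideals presuppose that $\overline R$ is a finite product of DVRs, which is not part of the standing hypotheses (the paper only adds such finiteness in Section~\ref{sec6}). The paper avoids this by observing that every $R_{(I,n)}$ lies inside $B=I:I$, which is always a finitely generated $R$-module, and runs the ascending-chain argument there (Proposition~\ref{5.2}, Theorem~\ref{5.5}).
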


If $I\neq R$, then we can choose $n\ge 0$ such that $I=\sqrt{I_0}\sqrt{I_1}\cdots\sqrt{I_n}$ and $\sqrt{I_{i}}=\prod_{\fkp\in V(I_i)}\fkp$ for each $0\le i\le n$. Therefore, corresponding to the fact that every non-zero ideal in a DVR is the power of the maximal ideal, $I$ can be decompose into the products of maximal ideals in  rings $R$, $R_{(I, 1)}, R_{(I, 2)}, \cdots, R_{(I, n)}$.

We now explain the organization of this paper. In Section 2, based on \cite{L}, we introduce the definition and basic properties of Arf rings, which we subsequently need. Section 3 provides the key property in this study regarding elements in the set $\calX_R$ of integrally closed ideals in $R$ that contain a non-zerodivisor on $R$.  
In Section 4, we prove the main result of this paper.
In Sections 5, we study Corollary \ref{5.6} in detail. This section studies the case where $R$ and $\overline{R}$ are local rings, while introducing some examples.

Throughout this paper, unless otherwise specified, let $R$ be an arbitrary commutative ring, let $W(R)$ be the set of non-zerodivisors on $R$, and let $\calF_R$ be the set of {\it open} ideals in $R$, that is, the ideals of $R$ that contain a non-zerodivisor on $R$. For an ideal $I$ in $R$, $\overline{I}$ denotes the integral closure of $I$ in $R$, and set $\calX_R=\{ I\in \calF_R \mid \overline{I}=I \}$. We set $X:Y=\{a\in\rmQ(R) \mid aY\subseteq X\}$ for $R$-submodules $X$ and $Y$ of the total ring of fractions $\rmQ(R)$.

When $R$ is a Noetherian local ring with the maximal ideal $\m$, let $v(R)$ (resp. $\rme(R)$) denote the embedding dimension of $R$ (resp. the multiplicity of $R$ with respect to $\m$).


\section{Arf rings}\label{sec2}

In this section, we introduce the definition and basic properties of Arf rings, based on \cite{L}. For an arbitrary commutative ring $R$, let $W(R)$ be the set of non-zerodivisors on $R$. We denote by $\calF_R$ the set of ideals in $R$ that contain a non-zerodivisor on $R$.

First, we define the algebra $R^I$ for each $I\in\calF_R$. Let $I\in \calF_R$. We consider the tower of $R$-algebras as follows:
$$R\subseteq I:I\subseteq I^2:I^2\subseteq \cdots \subseteq I^n:I^n \subseteq \cdots \subseteq \rmQ(R).$$
We set $R^I=\bigcup_{n\ge0}[I^n:I^n]$. Then, $R^I$ is an intermediate ring between $R$ and $\rmQ(R)$. If $a\in I$ is a reduction of $I$, that is, $I^{r+1}=aI^r$ for some $r\ge 0$, then $R^I=I^n:I^n$ for any $n\ge r$ and we have 
$$R^I=R\left[ \tfrac{I}{a}  \right] = \frac{I^r}{a^r},\ \text{where} \  \frac{I}{a}= \{ \frac{x}{a} \mid x\in I \}\subseteq \rmQ(R).$$
In particular, the following holds true.

\begin{lem}\label{2.1}
Let $I\in\calF_R$. Suppose that there exists $a\in I$ and $r\ge0$ such that $I^{r+1}=aI^r$. Then, the following conditions are equivalent:
\begin{enumerate}[$(1)$]
\item
$R^I=I:I.$
\item
$I:I=\frac{I}{a}.$
\item
$I^2=aI.$
\end{enumerate}
Therefore, condition $(3)$ is independent of the choice of reductions $a\in I$. An ideal $I\in \calF_R$ that satisfies condition $(3)$ is called a stable ideal.
\end{lem}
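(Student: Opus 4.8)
The plan is to route all three equivalences through the single ideal-theoretic statement $\frac{I}{a}\subseteq I:I \iff I^{2}=aI$, after two preliminary observations valid for \emph{every} reduction $a\in I$. First, a reduction $a$ is automatically a non-zerodivisor: choosing a non-zerodivisor $b\in I$, the relation $I^{r+1}=aI^{r}$ forces $b^{r+1}\in aI^{r}$, so $a$ divides the non-zerodivisor $b^{r+1}$ and hence lies in $W(R)$; this is exactly what makes the fractions $\frac{I}{a}$ and $\frac{I^{r}}{a^{r}}$ meaningful as $R$-submodules of $\rmQ(R)$. Second, the inclusion $I:I\subseteq\frac{I}{a}$ holds unconditionally: for $q\in I:I$ we have $qa\in qI\subseteq I$ because $a\in I$, whence $q=\frac{qa}{a}\in\frac{I}{a}$.

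The core step is the verification that $\frac{I}{a}\subseteq I:I$ is equivalent to condition $(3)$. Writing out $\frac{x}{a}\cdot I\subseteq I$ for all $x\in I$, this says $xI\subseteq aI$ for all $x\in I$, i.e. $I^{2}\subseteq aI$; and since $a\in I$ yields $aI\subseteq I^{2}$ for free, the inclusion $\frac{I}{a}\subseteq I:I$ is precisely $I^{2}=aI$. Combined with the unconditional inclusion $I:I\subseteq\frac{I}{a}$ from the first paragraph, condition $(2)$, namely the equality $I:I=\frac{I}{a}$, reduces to the single inclusion $\frac{I}{a}\subseteq I:I$, and is therefore equivalent to $(3)$. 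This disposes of $(2)\Leftrightarrow(3)$ without any mention of $R^{I}$.

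To bring in $(1)$ I would use the formula recalled just before the statement. For $(3)\Rightarrow(1)$: if $I^{2}=aI$, then $a$ is a reduction with $I^{1+1}=aI^{1}$, so that formula gives $R^{I}=I^{n}:I^{n}$ for every $n\ge1$, and in particular $R^{I}=I:I$. For $(1)\Rightarrow(3)$: since $R^{I}=R\!\left[\frac{I}{a}\right]$, we always have $\frac{I}{a}\subseteq R^{I}$, so assuming $(1)$ gives $\frac{I}{a}\subseteq I:I$, which is $(3)$ by the core step. I do not expect a serious obstacle here: the only points that require care are confirming $a\in W(R)$ so that the fractions are well defined, and isolating the unconditional inclusion $I:I\subseteq\frac{I}{a}$, which is what makes the entire content of the lemma collapse onto the reverse inclusion $\frac{I}{a}\subseteq I:I$. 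Finally, the closing assertion that condition $(3)$ is independent of the chosen reduction follows formally, since the equivalence $(1)\Leftrightarrow(3)$ holds for any reduction $a$ whereas condition $(1)$ refers only to the intrinsic ring $R^{I}$.
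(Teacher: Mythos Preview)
Your proof is correct and follows essentially the same route as the paper: both rely on the unconditional inclusion $I:I\subseteq\frac{I}{a}$ and the identity $R^{I}=R\!\left[\frac{I}{a}\right]$, with the paper arranging the implications as a cycle $(3)\Rightarrow(1)\Rightarrow(2)\Rightarrow(3)$ while you prove $(2)\Leftrightarrow(3)$ and $(1)\Leftrightarrow(3)$ directly. Your explicit verification that a reduction $a$ is automatically a non-zerodivisor is a useful detail the paper leaves implicit.
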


\begin{proof}
The implication $(3) \Rightarrow (1)$ is true. Let $A=I:I$. The inclusion $A\subseteq \frac{I}{a}$ always holds. 

$(1)\Rightarrow(2)$ Because $R^I=A\subseteq \frac{I}{a}\subseteq R\left[ \frac{I}{a} \right]=R^I$, we have $A=\frac{I}{a}$.

$(2)\Rightarrow(3)$ Because $I=IA=\frac{I^2}{a}$, we have $I^2=aI$.
\end{proof}

In what follows, unless otherwise specified, we assume that $R$ satisfies the condition $(A)$.

\begin{enumerate}
\item[$(A)$]
$R$ is a Cohen-Macaulay semi-local ring of dimension one with $\dim R_M=1$ for every $M\in\Max R$.
\end{enumerate}

Under this assumption, the notion of Arf rings is defined as follows. We set $\calX_R=\{I\in\calF_R \mid \overline{I}=I \}$.

\begin{defn}[\cite{L}]\label{2.2}
$R$ is called an {\it Arf ring} if the following conditions hold:
\begin{enumerate}[$(1)$]
\item
For every $I\in\calX_R$, there exists an element $a\in I$ such that $I=\overline{aR}$, that is, $I^{n+1}=aI^n$ for some $n\ge0$.  
\item
If $x, y, z \in R$ such that $x\in W(R)$ and $\frac{y}{x}, \frac{z}{x}\in \overline{R}$, then $\frac{yz}{x}\in R$.
\end{enumerate}
\end{defn}

Arf rings are characterized in terms of the stability of the integrally closed ideals in $\calX_R$.

\begin{thm} [{\cite[Theorem 2.2]{L}}]\label{2.3}
The following conditions are equivalent: 
\begin{enumerate}[$(1)$]
\item
$R$ is an Arf ring.
\item
For every $I\in \calX_R$, there exists $a\in I$ such that $I^2=aI$.

\end{enumerate}
\end{thm}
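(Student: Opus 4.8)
The plan is to prove the two implications separately, in each case routing everything through a single principal reduction together with the elementary identity $\overline{aR}=a\overline{R}\cap R$, valid for a non-zerodivisor $a$; concretely, for $y\in R$ one has $y\in\overline{aR}$ if and only if $y/a\in\overline{R}$. I would use Lemma \ref{2.1} freely, in particular the fact recorded there that stability $I^2=aI$ is independent of the chosen reduction $a$, together with the standard fact that, since $R$ is Noetherian, $J\subseteq I$ is a reduction of $I$ exactly when $\overline{J}=\overline{I}$.

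For $(1)\Rightarrow(2)$, I would fix $I\in\calX_R$ and use Definition \ref{2.2}(1) to pick $a\in I$ with $I=\overline{aR}$; then $a$ is a non-zerodivisor, $a$ is a principal reduction of $I$, and the identity above gives $I/a\subseteq\overline{R}$. Since $aI\subseteq I^2$ is automatic, the goal $I^2=aI$ reduces to $I^2\subseteq aI$. I would take arbitrary $y,z\in I$, so that $y/a,z/a\in\overline{R}$, and apply Definition \ref{2.2}(2) with $x=a$ to conclude $yz/a\in R$. Separately, $yz/a^2=(y/a)(z/a)\in\overline{R}$ because $\overline{R}$ is a ring; hence $yz/a$ is an element of $R$ whose quotient by $a$ lies in $\overline{R}$, so by the identity $yz/a\in\overline{aR}=I$, i.e. $yz\in aI$. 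This yields $I^2\subseteq aI$, as needed.

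For $(2)\Rightarrow(1)$, I must verify both clauses of Definition \ref{2.2}. Clause (1) is immediate: given $I\in\calX_R$, the hypothesis supplies $a\in I$ with $I^2=aI$, so $aR$ is a reduction of $I$ and $\overline{aR}=\overline{I}=I$, that is $I=\overline{aR}$. For clause (2), given $x\in W(R)$ with $y/x,z/x\in\overline{R}$, I would set $I=\overline{xR}\in\calX_R$; the identity shows $y,z\in I$, and since $\overline{xR}=\overline{I}$ the element $x$ is a reduction of $I$. By the reduction-independence in Lemma \ref{2.1}, the stability of $I$ then gives $I^2=xI$; consequently $yz\in I^2=xI\subseteq xR$, so $yz/x\in R$.

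I expect the main obstacle to be the final membership step in $(1)\Rightarrow(2)$: the Arf multiplicativity condition only delivers $yz/a\in R$, and upgrading this to $yz/a\in I$ is what actually encodes stability. The extra input making this work is that $\overline{R}$ is a ring, which forces $yz/a^2\in\overline{R}$ and thereby places $yz/a$ in $\overline{aR}=I$ rather than merely in $R$. In the converse direction the analogous delicate point is recognizing the pointwise condition in Definition \ref{2.2}(2) as stability of the single ideal $\overline{xR}$ and invoking reduction-independence to transfer stability from an unspecified reduction onto the given element $x$.
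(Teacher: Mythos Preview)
The paper does not prove Theorem \ref{2.3}; it is quoted from \cite[Theorem 2.2]{L} and used as a black box, so there is no in-paper argument to compare against. Your proof is correct and is essentially Lipman's original argument: the identity $\overline{aR}=a\overline{R}\cap R$ for $a\in W(R)$ is exactly what translates the pointwise condition in Definition~\ref{2.2}(2) into stability of $\overline{aR}$, and the reduction-independence recorded in Lemma~\ref{2.1} is precisely what lets you pass from stability with respect to \emph{some} reduction to stability with respect to the given element $x$ in the converse direction. One small point worth making explicit: when you invoke Definition~\ref{2.2}(1) to get $a\in I$ with $I=\overline{aR}$, the fact that $a\in W(R)$ (needed for the identity and for Definition~\ref{2.2}(2)) follows because $I\in\calF_R$ contains a non-zerodivisor $f$, and $f^{n+1}\in aI^n\subseteq aR$ forces $a$ to be a non-zerodivisor as well.
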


For a Noetherian local ring $R$,  $v(R)$ denotes the embedding dimension of $R$, and $\rme(R)$ denotes the multiplicity of $R$ with respect to the maximal ideal. When $R$ is an Arf ring, the equation $v(R_M)=\rme(R_M)$ holds for every $M\in\Max R$ (i.e., $R_M$ has minimal multiplicity), because $M^2=aM$ for some $a\in M$. In contrast, even if $R_M$ has minimal multiplicity for every $M\in \Max R$, $R$ is not necessarily an Arf ring.

The structure of $R^I$ plays an important role in understanding the structure and properties of an Arf ring $R$.  Let us begin with the following:

\begin{lem}[c.f. {\cite[Lemma 2.3]{L}}]\label{2.4}
Let $R$ be an arbitrary commutative ring and let $I\in \calX_R$. Suppose that there exists $a\in I$ such that $I=\overline{aR}$ and $I^2=aI$. We set $A=I:I$. Then, we have 
$$\calX_A=\{ \frac{\fka}{a} \mid \fka\in\calX_R,\ \fka\subseteq I\}.$$ 
\end{lem}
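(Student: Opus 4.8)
The plan is to show that the two assignments $\fka \mapsto \frac{\fka}{a}$ and $\calJ \mapsto a\calJ$ are mutually inverse bijections between the two sides of the claimed equality, and that each carries its source into the asserted target. First I would record the ambient structure. Since $I^2=aI$, Lemma \ref{2.1} gives that $A=I:I=\frac{I}{a}=R^I$ is a ring; and since $I=\overline{aR}$, dividing an integral equation of any $x\in I$ over $aR$ by the appropriate power of $a$ shows $\frac{x}{a}\in\overline{R}$, so that $R\subseteq A\subseteq \overline{R}$. Because $a\in W(R)$ is a unit of $\rmQ(R)$, the maps $\fka\mapsto\frac{\fka}{a}$ and $\calJ\mapsto a\calJ$ are mutually inverse and inclusion-preserving on subsets of $\rmQ(R)$; the entire content is therefore to verify the three defining properties of $\calX$ on each side.

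For the inclusion $\supseteq$, fix $\fka\in\calX_R$ with $\fka\subseteq I$. The crucial first claim is that $\fka$ is automatically an $A$-module, i.e. $A\fka=\fka$: given $\alpha\in A$ and $y\in\fka$, the product $\alpha y$ lies in $I\subseteq R$ because $\alpha\in I:I$ and $y\in I$, while multiplying an integral equation of $\alpha$ over $R$ by $y^m$ exhibits $\alpha y$ as integral over $\fka$; as $\fka$ is integrally closed this forces $\alpha y\in\fka$. Hence $\frac{\fka}{a}\subseteq\frac{I}{a}=A$ is an ideal of $A$. It is open: a non-zerodivisor $b\in\fka\cap W(R)$ gives $\frac{b}{a}\in\frac{\fka}{a}$, a unit of $\rmQ(R)$ and so a non-zerodivisor on $A$. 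It is integrally closed in $A$: if $w\in A$ is integral over $\frac{\fka}{a}$, clearing denominators (multiply by $a^m$) shows $aw$ is integral over $\fka$, and $aw\in aA=I\subseteq R$, whence $aw\in\overline{\fka}=\fka$ and $w\in\frac{\fka}{a}$. Thus $\frac{\fka}{a}\in\calX_A$.

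For the reverse inclusion, fix $\calJ\in\calX_A$ and set $\fka=a\calJ$, so that $\frac{\fka}{a}=\calJ$. Then $\fka\subseteq aA=I$, and $\fka$ is an ideal of $R$ because $R\subseteq A$ and $A\calJ=\calJ$. It contains a non-zerodivisor on $R$: if $c\in\calJ\cap W(A)$ then $ac\in\fka\cap R$, and $ac\in W(R)$ since $a$ is a unit of $\rmQ(R)$ while multiplication by $c$ is injective on $R\subseteq A$. To see $\fka\in\calX_R$, suppose $r\in R$ is integral over $\fka=a\calJ$; since $a\calJ\subseteq I$, $r$ is integral over $I$, so $r\in\overline{I}=I=aA$ and $\frac{r}{a}\in A$. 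Clearing denominators then shows $\frac{r}{a}$ is integral over $\calJ$, so $\frac{r}{a}\in\calJ$ by integral closedness of $\calJ$ in $A$, and $r\in a\calJ=\fka$. Hence $\fka\in\calX_R$ with $\fka\subseteq I$, which completes the proof.

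The step I expect to require the most care is the \emph{ring-crossing} bookkeeping inside the two integral-closedness verifications: each time an element is shown to satisfy an integral equation, one must separately certify that it actually lies in the ring ($R$ or $A$) in which the relevant ideal is integrally closed before concluding membership. The two facts that make this work are $A=I:I$ together with $\fka\subseteq I$ (keeping products such as $\alpha y$ and $aw$ inside $I\subseteq R$) and the integral closedness of $I$ itself (so that $\overline{I}=I=aA$ places $\frac{r}{a}$ back in $A$). I would isolate the identity $A\fka=\fka$ as a preliminary claim, since it is the heart of the argument and the point where a circular appeal to integral closure would be easiest to slip in.
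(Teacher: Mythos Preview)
Your proof is correct and follows essentially the same route as the paper's: both directions are handled by the same ``clear denominators by $a$'' bookkeeping, and your integral-closedness arguments in $A$ and in $R$ match the paper's line for line. The only cosmetic difference is in the verification of $A\fka=\fka$: the paper writes it globally as $\fka A\subseteq \fka\overline{R}\cap R\subseteq\overline{\fka}=\fka$ (using $I=\overline{aR}=a\overline{R}\cap R$), whereas you unpack this elementwise by multiplying an integral equation of $\alpha\in A\subseteq\overline{R}$ by $y^m$; these are the same fact.
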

\begin{proof}
Suppose that $\fka\in \calX_R$ and $\fka\subseteq I$. We set $\fkb=\frac{\fka}{a}$. First, we show that $\fkb\in\calF_A$. Because 
 $I=\overline{aR}=a\overline{R}\cap R$, we have $A=\frac{I}{a}\subseteq \overline{R}$. Additionally, we have
$$\fka A=\frac{\fka I}{a}\subseteq \frac{I^2}{a}=I\subseteq R.$$
Then, we get
$$\fka A\subseteq \fka\overline{R}\cap R\subseteq \overline{\fka\overline{R}}\cap R=\overline{\fka}=\fka,$$
which implies that $\fka A=\fka$. Therefore, we also have $\fkb A=\fkb$, so that $\fkb$ is an ideal of $A$. $\fka\subseteq \fkb$ implies that $\fkb\in\calF_A$.  
Second, we show that $\fkb\in\calX_A$. Take $\alpha\in\overline{\fkb}\subseteq A$. Then
$$\alpha^n+b_1\alpha^{n-1}+\cdots+b_{n-1}\alpha+b_n=0$$
for some $n\ge 1$ and $b_i\in \fkb^i$ $(1\le i\le n)$. Because $a\alpha\in a\fkb=\fka\subseteq R$, we obtain
$$(a\alpha)^n+ab_1(a\alpha)^{n-1}+\cdots+a^{n-1}b_{n-1}(a\alpha)+a^nb_n=0\ \ \text{and}$$
$$a^ib_i\in a^i\fkb^i=\fka^i\ \ (1\le i\le n),$$
which implies that $a\alpha\in\overline{\fka}=\fka$. Therefore, $\alpha \in \frac{\fka}{a}=\fkb$, which yields $\overline{\fkb}=\fkb$.

Conversely, let $\fkb\in\calX_A$ and set $\fka=a\fkb$. Then, $\fka\in\calF_R$ and $\fka \subseteq I$. For $x\in \overline{\fka}$, we take the equation
$$x^n+c_1x^{n-1}+\cdots+c_{n-1}x+c_n=0$$
where $n\ge 1$ and $c_i\in\fka^{i}$ $(1\le i\le n)$. Then, because $x\in \overline{\fka}\subseteq \overline{I}=I$, we get $\frac{x}{a}\in \frac{I}{a}=A$ and
$$\left(\frac{x}{a}\right)^n+\left(\frac{c_1}{a}\right)\left(\frac{x}{a}\right)^{n-1}+\cdots+\left(\frac{c_{n-1}}{a^{n-1}}\right)\left(\frac{x}{a}\right)+\left(\frac{c_n}{a^n}\right)=0,$$
where $\frac{c_i}{a^i}\in \fkb^i$ $(1\le i \le n)$, which implies that $\frac{x}{a}\in\overline{\fkb}=\fkb$. Therefore, we have $x\in a\fkb=\fka$, so that $\fka\in \calX_R$, as desired. 
\end{proof}

When $R$ satisfies condition $(A)$, $R^I$ also satisfies the condition for every $I\in \calX_R$. Therefore, by using Lemma \ref{2.1}, Theorem \ref{2.3}, and Lemma \ref{2.4}, we obtain the following:
  
\begin{cor}\label{2.5}
If $R$ is an Arf ring, $R^I(=I:I)$ is also an Arf ring for every $I\in \calX_R$.
\end{cor}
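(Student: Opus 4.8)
The plan is to apply the stability criterion of Theorem \ref{2.3} to the ring $A := R^I$, using Lemma \ref{2.4} to transport a reduction from $R$ back to $A$. First I would record the standing data coming from the Arf hypothesis on $R$. Since $R$ is an Arf ring and $I\in\calX_R$, Definition \ref{2.2}(1) supplies $a\in I$ with $I=\overline{aR}$, and Theorem \ref{2.3} supplies the relation $I^2=aI$ (stability being independent of the choice of reduction by Lemma \ref{2.1}). By Lemma \ref{2.1} this simultaneously identifies $R^I=I:I=:A$ and verifies exactly the hypotheses of Lemma \ref{2.4}. As noted just before the statement, $A$ again satisfies condition $(A)$, so the criterion of Theorem \ref{2.3} is available for $A$: it suffices to show that every ideal $\fkb\in\calX_A$ admits $b\in\fkb$ with $\fkb^2=b\fkb$.

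So let $\fkb\in\calX_A$ be arbitrary. By Lemma \ref{2.4} we may write $\fkb=\frac{\fka}{a}$ for some $\fka\in\calX_R$ with $\fka\subseteq I$. Because $R$ is an Arf ring and $\fka\in\calX_R$, Theorem \ref{2.3} yields an element $c\in\fka$ with $\fka^2=c\fka$. I would then set $b=\frac{c}{a}$, which lies in $\frac{\fka}{a}=\fkb$, and compute directly
$$\fkb^2=\frac{\fka^2}{a^2}=\frac{c\fka}{a^2}=\frac{c}{a}\cdot\frac{\fka}{a}=b\fkb.$$
Thus $\fkb$ is stable, and since $\fkb\in\calX_A$ was arbitrary, Theorem \ref{2.3} gives that $A=R^I$ is an Arf ring.

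The argument is short precisely because the two genuinely nontrivial inputs have already been isolated. The computational heart is the formal identity $\fka^2/a^2=(c/a)(\fka/a)$, which is immediate once the reduction $\fka^2=c\fka$ is in hand. The step I would treat most carefully is the verification that the hypotheses of Lemma \ref{2.4} hold for the fixed $a$, i.e. that the Arf property of $R$ forces $I$ itself to be a stable ideal with $I=\overline{aR}$; this is what makes the description $\calX_A=\{\fka/a\mid \fka\in\calX_R,\ \fka\subseteq I\}$ applicable and lets every ideal of $A$ in $\calX_A$ be pulled back to an ideal of $R$ where the Arf hypothesis can be applied. One should also confirm that $A$ inherits condition $(A)$, which is asserted in the text preceding the corollary and is needed before Theorem \ref{2.3} can legitimately be invoked for $A$.
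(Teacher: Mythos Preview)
Your proof is correct and is precisely the argument the paper intends: it cites Lemma \ref{2.1}, Theorem \ref{2.3}, and Lemma \ref{2.4} without writing out the details, and your proposal is the natural unpacking of that citation. The only cosmetic difference is that one could invoke Theorem \ref{2.3} first to obtain $a\in I$ with $I^2=aI$ directly (whence $a$ is automatically a reduction and $I=\overline{aR}$ since $I\in\calX_R$), rather than starting from Definition \ref{2.2}(1) and then appealing to the reduction-independence in Lemma \ref{2.1}; but this is a matter of ordering, not substance.
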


\begin{cor}\label{2.6}
If $R$ is an Arf ring, $v([R^I]_M)=\rme([R^I]_M)$ holds for every $I\in \calX_R$ and $M\in \Max R^I$.
\end{cor}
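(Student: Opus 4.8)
The plan is to derive this directly from Corollary \ref{2.5} together with the minimal-multiplicity property of Arf rings recorded in the remark following Theorem \ref{2.3}. The point is that $R^I$ is itself an Arf ring of the same type as $R$, so the desired equality is nothing more than the statement that an Arf ring has minimal multiplicity at each of its maximal ideals, applied to $R^I$ in place of $R$.

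Concretely, I would first invoke Corollary \ref{2.5} to see that $R^I = I:I$ is an Arf ring; since $R$ satisfies condition $(A)$, so does $R^I$ (as noted just before Corollary \ref{2.5}), and thus $R^I$ falls within the scope of the theory. It then remains to run the argument behind the remark after Theorem \ref{2.3} for $R^I$. Fix $M \in \Max R^I$. Because $M$ is prime and the integral closure of an ideal is contained in its radical, $\overline{M} \subseteq \sqrt{M} = M$, so $\overline{M} = M$; and because $M$ has height one in the Cohen--Macaulay ring $R^I$, it is not an associated prime, so by prime avoidance it contains a non-zerodivisor. Hence $M \in \calX_{R^I}$, and Theorem \ref{2.3} applied to the Arf ring $R^I$ produces $a \in M$ with $M^2 = aM$. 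Localizing at $M$ gives $(M[R^I]_M)^2 = a\,(M[R^I]_M)$.

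The remaining step is to read off $v([R^I]_M) = \rme([R^I]_M)$ from the stability relation $(M[R^I]_M)^2 = a\,(M[R^I]_M)$. This is the one piece of genuine content, and it is exactly the classical fact---already used in the remark after Theorem \ref{2.3}---that a one-dimensional Cohen--Macaulay local ring whose maximal ideal admits a principal reduction has its embedding dimension equal to its multiplicity. I expect this to be the only step requiring care; everything preceding it is a formal assembly of Corollary \ref{2.5}, the containment $\overline{M} \subseteq \sqrt{M}$, and Theorem \ref{2.3}, none of which involves computation.
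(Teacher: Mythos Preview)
Your proposal is correct and follows exactly the approach the paper intends: the paper states Corollary~\ref{2.6} without an explicit proof, treating it as an immediate consequence of Corollary~\ref{2.5} (so $R^I$ is an Arf ring satisfying condition~$(A)$) combined with the remark after Theorem~\ref{2.3} that every Arf ring has minimal multiplicity at each maximal ideal. Your write-up simply unpacks that remark in detail, and every step is sound.
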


\begin{rem}
$R^I$ is not necessarily a local ring, even if $R$ is a local ring.
\end{rem}

We denote by $\operatorname{J}(R)$ as the Jacobson radical of $R$. Then, $\operatorname{J}(R)\in\calX_R$. Here, we set $R_1=R^{\operatorname{J}(R)}$ and define recursively 
\begin{center}
$R_n=$
$
\begin{cases}
\  R & \ \ (n=0)\\
\ [R_{n-1}]_1 & \ \ (n>0)\\
\end{cases}
$
\end{center}
for each $n\ge 0$. Then, we get the tower
$$R=R_0\subseteq R_1\subseteq \cdots \subseteq R_n \subseteq \cdots \subseteq \overline{R}$$
of rings, and every $R_n$ satisfies condition $(A)$. Moreover, every $R_n$ is an Arf ring if so is $R$. 

By using this tower of rings, we get another characterization of Arf rings.

\begin{thm}[{\cite[Theorem 2.2]{L}}]\label{2.7}
The following conditions are equivalent:
\begin{enumerate}[$(1)$]
\item
$R$ is an Arf ring.
\item
$v([R_n]_M)=\rme([R_n]_M)$ holds for every $n\ge0$ and $M\in\Max R_n$.
\end{enumerate}
\end{thm}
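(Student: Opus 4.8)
The plan is to establish $(1)\Leftrightarrow(2)$ and then deduce the last assertion, using Theorem \ref{2.3} throughout to replace ``$R$ is Arf'' by ``every $I\in\calX_R$ is stable.'' For $(1)\Rightarrow(2)$: as observed just before the statement, if $R$ is Arf then each $R_n$ is an Arf ring satisfying $(A)$, and an Arf ring has minimal multiplicity at every maximal ideal (noted after Theorem \ref{2.3}). Concretely, $\operatorname{J}(R_n)\in\calX_{R_n}$ is stable, say $\operatorname{J}(R_n)^2=a\,\operatorname{J}(R_n)$, and localizing at $M\in\Max R_n$ (where $\operatorname{J}(R_n)_M=M[R_n]_M$) yields $\m_M^2=a\,\m_M$, that is $v([R_n]_M)=\rme([R_n]_M)$; this is $(2)$.

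For $(2)\Rightarrow(1)$ I first reformulate $(2)$. For a ring $S$ satisfying $(A)$, the condition ``$v(S_M)=\rme(S_M)$ for all $M\in\Max S$'' is equivalent to the stability of $\operatorname{J}(S)$: one implication is the localization above, and for the converse one picks a single reduction $a\in\operatorname{J}(S)$ that is a minimal reduction of $MS_M$ for every $M$ at once (prime avoidance), so that the local identities $\m_M^2=a\,\m_M$ patch to $\operatorname{J}(S)^2=a\,\operatorname{J}(S)$. By Lemma \ref{2.1} this means $S_1=\operatorname{J}(S):\operatorname{J}(S)$. Taking $S=R_n$, condition $(2)$ says exactly that $R_{n+1}=\operatorname{J}(R_n):\operatorname{J}(R_n)$ for every $n$; since the tower of $R_1$ is the tail $R_1\subseteq R_2\subseteq\cdots$, condition $(2)$ is inherited by $R_1$.

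I then show every $I\in\calX_R$ is stable by induction on $\ell_R(\overline{R}/R)$, finite since $\overline{R}$ is a finite $R$-module. If $\overline{R}=R$, then each $R_M$ is a DVR, so every $I_M$ is a power of $MR_M$, hence principal and stable; testing against a fixed reduction of $I$ makes $I$ stable. If $\overline{R}\neq R$, then $R$ is not normal, so $R_1=\operatorname{J}(R):\operatorname{J}(R)\supsetneq R$ and $\ell_R(\overline{R}/R_1)<\ell_R(\overline{R}/R)$; as $R_1$ satisfies $(A)$ and inherits $(2)$, the induction hypothesis makes $R_1$ Arf. Given $I\in\calX_R$ with $I\neq R$, the ideal $I\cap\operatorname{J}(R)\in\calX_R$ is contained in $\operatorname{J}(R)$, has the same localization as $I$ at each $M\in V(I)$, and satisfies $I_M=R_M$ for $M\notin V(I)$; so the stability of $I$ reduces to that of $I\cap\operatorname{J}(R)$, and I may assume $I\subseteq\operatorname{J}(R)$. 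Writing $\operatorname{J}(R)^2=a\,\operatorname{J}(R)$ and $R_1=\operatorname{J}(R)/a$, Lemma \ref{2.4} gives $I/a\in\calX_{R_1}$, which is stable because $R_1$ is Arf; from $(I/a)^2=b(I/a)$ with $b\in I/a$ I obtain $I^2=(ab)I$ with $ab\in I$, so $I$ is stable. By Theorem \ref{2.3}, $R$ is Arf.

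I expect the main obstacle to be the $(2)\Rightarrow(1)$ direction, and within it the two patching steps—assembling the pointwise minimal-multiplicity equalities into the single identity $\operatorname{J}(R)^2=a\,\operatorname{J}(R)$, and reducing an arbitrary $I\in\calX_R$ to one lying in $\operatorname{J}(R)$—both of which rely on stability of an open ideal being testable locally against one fixed reduction, with finiteness of $\overline{R}$ supplying termination of the induction. For the last assertion, under $(1)$ each $R_n$ is Arf and $R_n\subsetneq R_{n+1}=R_n^{\operatorname{J}(R_n)}$ whenever $R_n$ is not normal; hence the strictly decreasing lengths $\ell_R(\overline{R}/R_n)$ force $R_N=R_{N+1}$ for some $N$, so $R_N$ is normal, thus integrally closed in $\rmQ(R)$ and integral over $R$, giving $R_N=\overline{R}$ and $\overline{R}=\bigcup_{n\ge0}R_n$.
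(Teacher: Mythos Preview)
The paper does not supply its own proof of Theorem \ref{2.7}; the result is quoted from \cite[Theorem 2.2]{L} without argument, so there is nothing in the paper to compare your attempt against directly.

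That said, your outline has a genuine gap. You assert that $\ell_R(\overline{R}/R)$ is ``finite since $\overline{R}$ is a finite $R$-module,'' and both your induction for $(2)\Rightarrow(1)$ and your ``strictly decreasing lengths'' argument for the last assertion rest on this. Condition $(A)$ does \emph{not} guarantee that $\overline{R}$ is module-finite over $R$; indeed the paper imposes this as an additional hypothesis later, in Section \ref{sec6}. Without it your induction has no starting point and the chain $R_n$ need not stabilize. For $(2)\Rightarrow(1)$ one can repair this by inducting on the \emph{finite} length $\ell_R(R/I)$ rather than on $\ell_R(\overline{R}/R)$: with $I\subseteq\operatorname{J}(R)$ and $\operatorname{J}(R)^2=a\operatorname{J}(R)$, Lemma \ref{2.4} gives $I/a\in\calX_{R_1}$ and $R_1/(I/a)\cong\operatorname{J}(R)/I$, which has strictly smaller $R$-length than $R/I$. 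The last assertion requires a separate argument not using finiteness of $\overline{R}$ (for instance, showing that every $\alpha\in\overline{R}$ lies in $\overline{xR}:\overline{xR}$ for a suitable $x\in W(R)$ and then relating such rings to the tower $\{R_n\}$).

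A smaller problem: the step ``the stability of $I$ reduces to that of $I\cap\operatorname{J}(R)$'' is not correct as written. If $(I\cap\operatorname{J}(R))^2=c\,(I\cap\operatorname{J}(R))$ then $c\in\operatorname{J}(R)$ lies in every maximal ideal, so for $M\notin V(I)$ one has $(cI)_M=cR_M\subsetneq R_M=(I^2)_M$, and $c$ does not witness stability of $I$. You need an additional Chinese-remainder step producing an element of $I$ that is a unit at each $M\notin V(I)$ and agrees with $c$ up to a unit at each $M\in V(I)$; the same kind of patching is also what makes your passage from the local equalities $v=e$ to a global identity $\operatorname{J}(S)^2=a\operatorname{J}(S)$ honest.
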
 

\begin{ex}[{\cite[Example, P. 664]{L}}]\label{2.8}
Let $R$ be a Cohen-Macaulay local ring with $\rme(R)\le 2$. Then, $v([R_n]_M)=\rme([R_n]_M)\le 2$ holds for every $n\ge0$ and $M\in\Max R_n$. Therefore, $R$ is an Arf ring according to Theorem \ref{2.7}.
\end{ex}


\section{Integrally closed ideals}\label{sec3}

The purpose of this section is to show the key properties of ideals in $\calX_R$, which we need in this study. Throughout this section, let $R$ be a Noetherian ring. 
We begin with the following:

\begin{lem}\label{3.1}
$I:I=(R:I)\cap \overline{R}$ for every $I\in\calX_R$.
\end{lem}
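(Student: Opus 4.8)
The plan is to prove the equality by establishing the two inclusions separately. Both are elementary consequences of the definitions together with the Noetherian hypothesis, so no deep input is required beyond the determinant trick and the definition of the integral closure of an ideal.

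For the inclusion $I:I \subseteq (R:I)\cap\overline{R}$, I would start with $a \in I:I$, so that $aI \subseteq I \subseteq R$; this immediately gives $a \in R:I$. It remains to see $a \in \overline{R}$. Since $R$ is Noetherian, $I$ is a finitely generated $R$-module, and since $I$ contains a non-zerodivisor it is faithful. The relation $aI \subseteq I$ then lets me apply the determinant trick (Cayley--Hamilton) to the multiplication-by-$a$ map on $I$: writing $I=(x_1,\dots,x_m)$ and $ax_j=\sum_k c_{jk}x_k$ with $c_{jk}\in R$, the adjugate argument yields $\det(a\mathrm{I}_m-C)\cdot I=0$, and faithfulness forces $\det(a\mathrm{I}_m-C)=0$. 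This is a monic polynomial over $R$ satisfied by $a$, so $a$ is integral over $R$, i.e. $a \in \overline{R}$.

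For the reverse inclusion $(R:I)\cap\overline{R}\subseteq I:I$, I take $a \in (R:I)\cap\overline{R}$, so that $aI\subseteq R$ and $a$ satisfies a monic equation $a^n + r_1 a^{n-1} + \cdots + r_n = 0$ with $r_i \in R$. Fix $x \in I$; then $ax \in R$ by $aI\subseteq R$. Multiplying the monic equation by $x^n$ and regrouping as $(ax)^n + \sum_{i=1}^{n} (r_i x^i)(ax)^{n-i}=0$ exhibits $ax$ as integral over the ideal $I$, because $r_i x^i \in R\cdot I^i = I^i$ for each $i$. Since $ax \in R$, this means $ax \in \overline{I}$, and the hypothesis $I \in \calX_R$ gives $\overline{I} = I$, so $ax \in I$. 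As $x \in I$ was arbitrary, $aI \subseteq I$, i.e. $a \in I:I$.

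The computation in the reverse inclusion is the only step that requires a small idea, namely rewriting the integral dependence equation for $a$ over $R$ into an integral dependence equation for $ax$ over the ideal $I$. I do not anticipate any real obstacle; the care needed is simply to verify that the regrouped coefficients $r_i x^i$ land in the correct powers $I^i$ and that $ax$ already lies in $R$, so that membership in $\overline{I}$ is meaningful and the equality $\overline{I}=I$ can be invoked.
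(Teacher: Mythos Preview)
Your proof is correct. The forward inclusion via the determinant trick is standard (the paper omits it entirely), and your reverse inclusion is valid: multiplying the integral equation for $a$ over $R$ by $x^n$ does produce an equation of integral dependence for $ax$ over the ideal $I$, and since $ax\in R$ and $\overline{I}=I$ you are done.

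The paper's argument for the reverse inclusion is organized differently. Rather than manipulating the integral equation, it restricts first to $x\in I\cap W(R)$ and uses the identity $x\overline{R}\cap R=\overline{xR}$ (valid because $x$ is a non-zerodivisor) to get $x\varphi\in\overline{xR}\subseteq\overline{I}=I$ in one line; it then invokes the fact that an open ideal in a Noetherian ring is generated by its non-zerodivisors to pass from $I\cap W(R)$ to all of $I$. Your route avoids both of these auxiliary facts by handling every $x\in I$ uniformly, at the cost of writing out the integral equation explicitly. Either way the key point is the same---show $\varphi x\in\overline{I}$ and use $\overline{I}=I$---so the difference is one of packaging rather than of substance.
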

\begin{proof}
It is sufficient to show that $I:I\supseteq (R:I)\cap \overline{R}$. Let $\varphi\in \overline{R}$ and $\varphi\cdot I\subseteq R$. Then, we have
$$a\varphi\in a\overline{R}\cap R=\overline{aR}\subseteq \overline{I}=I$$
for any $a\in I\cap W(R)$. Because $I$ is generated by non-zerodivisors on $R$, we obtain $\varphi\cdot I\subseteq I$, which yields $\varphi\in I:I$.
\end{proof}

\begin{prop}\label{3.2}
Let $I, J\in\calX_R$. If $I\subseteq J$, then $I:I\supseteq J:J$, and $I$ is an ideal of the ring $J:J$. 
\end{prop}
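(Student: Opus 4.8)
The plan is to deduce both assertions from Lemma \ref{3.1}, which identifies the colon ring $I:I$ with the concrete intersection $(R:I)\cap\overline{R}$, thereby converting the comparison of the two \emph{rings} $I:I$ and $J:J$ into a comparison of the colon \emph{ideals} $R:I$ and $R:J$ inside the fixed overring $\overline{R}$. The only genuine input is this description; everything else reduces to the elementary fact that colons reverse inclusions in the second variable.

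For the first assertion, I would begin by writing $I:I=(R:I)\cap\overline{R}$ and $J:J=(R:J)\cap\overline{R}$ via Lemma \ref{3.1}; both $I$ and $J$ lie in $\calX_R$, so the lemma applies. Since $I\subseteq J$, any $\varphi\in\rmQ(R)$ with $\varphi J\subseteq R$ also satisfies $\varphi I\subseteq\varphi J\subseteq R$, giving $R:J\subseteq R:I$. Intersecting with $\overline{R}$ preserves this inclusion, and hence $J:J\subseteq I:I$, as desired.

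For the second assertion I would first observe that $J:J$ is a subring of $\rmQ(R)$ containing $R$ (it is closed under multiplication because $\varphi J\subseteq J$ and $\psi J\subseteq J$ force $\varphi\psi J\subseteq J$), so in particular $I\subseteq R\subseteq J:J$; thus $I$ is an $R$-submodule of $J:J$. It then remains to check stability under multiplication by $J:J$, that is, $(J:J)\,I\subseteq I$. This is immediate from the first assertion: $(J:J)\,I\subseteq (I:I)\,I\subseteq I$, the last inclusion being the defining property of $I:I$. Hence $I$ is an ideal of the ring $J:J$.

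The proof involves no serious obstacle; the one point requiring care is the direction of the inclusion $R:J\subseteq R:I$ and the recognition that the upgrade from ``$\varphi I\subseteq R$'' to ``$\varphi I\subseteq I$'' is exactly what Lemma \ref{3.1} supplies through the hypothesis $\overline{I}=I$. Without that lemma, knowing only $\varphi J\subseteq J$ yields $\varphi I\subseteq J$ rather than $\varphi I\subseteq I$, so the integral-closedness of $I$ is essential and must be invoked at precisely this step.
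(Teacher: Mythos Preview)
Your proof is correct and follows essentially the same approach as the paper: both invoke Lemma~\ref{3.1} to write $I:I=(R:I)\cap\overline{R}$ and $J:J=(R:J)\cap\overline{R}$, use the reversal $R:J\subseteq R:I$ from $I\subseteq J$, and then deduce that $I$ is an ideal of $J:J$ from the containment $J:J\subseteq I:I$ and the fact that $I$ is already an ideal of $I:I$. Your write-up is more detailed (particularly the explicit observation $I\subseteq R\subseteq J:J$ and the commentary on where integral closedness enters), but the argument is the same.
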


\begin{proof}
The inclusion $I\subseteq J$ implies $R:I\supseteq R:J$, so that
$$I:I=(R:I)\cap \overline{R}\supseteq (R:J)\cap \overline{R}=J:J$$
from Lemma \ref{3.1}. Because $I$ is an ideal of $I:I$ and $I:I\supseteq J:J$, $I$ is also an ideal of $J:J$.
\end{proof}

The following theorem is the key to our decomposition:

\begin{thm}\label{3.3}
Let $I, J\in\calX_R$ and suppose that $I\subseteq J$. We set $B=J:J$. Then, the following assertions hold true:

\begin{enumerate}[$(1)$]
\item
$I:J\in \calX_B$.
\item
Suppose that there exists $b\in J$ such that $J^2=bJ$. Then, $I:J=\frac{I}{b}$. Therefore, we have $I=J\cdot(I:J)$ and $I:J\cong I$ as $B$-modules. 
\end{enumerate}
\end{thm}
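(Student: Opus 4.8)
The plan is to prove (1) by verifying directly that $I:J$ satisfies the three requirements for membership in $\calX_B$, and then to obtain (2) from the stability hypothesis together with Proposition \ref{3.2} and Lemma \ref{2.1}.

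For (1), I would first check that $I:J$ is an ideal of $B$. If $a\in I:J$ then $aJ\subseteq I\subseteq J$, so $aJ\subseteq J$ and hence $a\in J:J=B$; thus $I:J\subseteq B$. To see it is a $B$-submodule, take $\beta\in B$ and $a\in I:J$: since $\beta J\subseteq J$ we get $(\beta a)J=a(\beta J)\subseteq aJ\subseteq I$, so $\beta a\in I:J$. That $I:J$ is open in $B$ is easy: for a non-zerodivisor $a\in I\cap W(R)$ we have $aJ\subseteq aR\subseteq I$, so $a\in I:J$, and $a$ remains a non-zerodivisor on the subring $B$ of $\rmQ(R)$.

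The heart of (1) is integral closedness, which I expect to be the main obstacle. Suppose $\alpha\in B$ is integral over the ideal $I:J$, say $\alpha^n+c_1\alpha^{n-1}+\cdots+c_n=0$ with $c_i\in(I:J)^i$. Fix $j\in J$; as $\alpha\in J:J$ we have $\alpha j\in J\subseteq R$. Multiplying the relation by $j^n$ yields $(\alpha j)^n+\sum_{i=1}^{n}(c_ij^i)(\alpha j)^{n-i}=0$, and since $(I:J)^iJ^i\subseteq I^i$ each coefficient $c_ij^i$ lies in $I^i$. Hence $\alpha j$ is integral over $I$, so $\alpha j\in\overline I=I$ because $I\in\calX_R$. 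As $j\in J$ was arbitrary this gives $\alpha J\subseteq I$, i.e. $\alpha\in I:J$, proving $I:J\in\calX_B$. The key device is the homogenization by $j^n$, which converts integrality over $I:J$ inside $B$ into integrality over $I$ inside $R$, where the integral closedness of $I$ can be invoked.

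For (2), assume $J^2=bJ$ with $b\in J$. I would first note that $b\in W(R)$: picking a non-zerodivisor $c\in J$, from $c^2\in J^2=bJ$ write $c^2=bj$ with $j\in J$; if $by=0$ then $c^2y=j(by)=0$, and $c^2\in W(R)$ forces $y=0$. Now Lemma \ref{2.1} applies to $J$ with reduction $b$, giving $B=J:J=\tfrac Jb$, that is $J=bB$. By Proposition \ref{3.2}, $I$ is an ideal of $B$, so $IB=I$ and therefore $IJ=I(bB)=b(IB)=bI$. The inclusion $I:J\subseteq\tfrac Ib$ is immediate, since $a\in I:J$ gives $ab\in aJ\subseteq I$ and hence $a=\tfrac{ab}b\in\tfrac Ib$; conversely, for $x\in I$ one has $\tfrac xb\,J=\tfrac{xJ}b\subseteq\tfrac{IJ}b=\tfrac{bI}b=I$, so $\tfrac Ib\subseteq I:J$. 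This proves $I:J=\tfrac Ib$. Finally $J\cdot(I:J)=\tfrac{JI}b=\tfrac{bI}b=I$, and multiplication by the non-zerodivisor $b$ is a $B$-linear bijection $\tfrac Ib\to I$, so $I:J\cong I$ as $B$-modules.
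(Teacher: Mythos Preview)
Your proof is correct and follows essentially the same route as the paper's: the homogenization trick (multiplying an integral dependence relation by $j^n$) to reduce integral closedness of $I:J$ in $B$ to that of $I$ in $R$, and for (2) the use of $J=bB$ together with Proposition \ref{3.2} to get $IB=I$ and hence $I:J=\tfrac{I}{b}$. Your argument is slightly more detailed---you explicitly verify $b\in W(R)$, which the paper leaves implicit---but the structure and the key ideas are the same.
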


\begin{proof}
$(1)$   Note that $I:J\subseteq J:J=B$ and 
$$J\cdot[B\cdot (I:J)]=JB\cdot (I:J)=J\cdot (I:J)\subseteq I.$$
Thus, $I:J$ is an ideal of $B$. Because $I\subseteq I:J$, we have $I:J\in\calF_B$.

 Next, we show that $\overline{I:J}=I:J$ in $B$. 
 This follows from the proof of \cite[Remark 1.3.2 (2)]{SH}. In fact,
 let $\alpha\in\overline{I:J}\subseteq B$. We take the equation
 $$\alpha^n+a_1\alpha^{n-1}+\cdots + a_{n-1}\alpha+a_n=0,$$
 where $n\ge 1$ and $a_i\in (I:J)^i$ $(1\le i \le n)$. For any $x\in J$, we have $x\alpha \in JB=J \subseteq R$, 
 $$(x\alpha)^n+xa_1(x\alpha)^{n-1}+\cdots + x^{n-1}a_{n-1}(x\alpha)+x^na_n=0\ \ \text{and}$$
 $$x^ia_i\in x^i\cdot(I:J)^i\subseteq I^i\ \ (1\le i\le n),$$
so that $x\alpha \in\overline{I}=I$. This implies that $\alpha\cdot J\subseteq I$, which yields $\alpha \in I:J$ and $\overline{I:J}=I:J$, as desired.

$(2)$
Suppose that $J^2=bJ$ for some $b\in J$. Then, $J=bB$, and the inclusion $I:J\subseteq \frac{I}{b}$ holds. Additionally, because $I$ is an ideal of $B$ by Proposition \ref{3.2}, we have
$$J\cdot \frac{I}{b}=bB\cdot \frac{I}{b}=IB=I,$$
which yields $\frac{I}{b}=I:J$ and $I=J\cdot (I:J)$. We also have $I:J=\frac{I}{b}\cong I$ as $B$-modules.
\end{proof}
 
 For any $I\in\calF_R$, we have $\sqrt{I}\in \calX_R$, and the following holds true:

\begin{prop}\label{3.4}
Let $I\in \calX_R$. Suppose that $I\neq R$, and there exist $a, b\in W(R)$ such that $I=\overline{aR}$ and $\sqrt{I}=\overline{bR}$. We set ${\rmH}_1(R)=\{ \fkp\in\Spec R \mid {\rm ht}_R \fkp =1  \}$. If $R$ satisfies $(S_2)$ and $\Ass_R R/I\subseteq {\rmH}_1(R)$, then we have $\sqrt{I}=\underset{\fkp\in V(I)\cap {\rmH}_1(R)}{\bigcap}\fkp$, and the following conditions are equivalent:  

\begin{enumerate}[$(1)$]
\item
$I=aR$.
\item
$\sqrt{I}=bR$.
\item
$R_\fkp$ is a DVR for any $\fkp \in \Ass_R R/I$.
\end{enumerate}

\end{prop}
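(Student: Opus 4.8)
The plan is to first dispose of the radical formula, which also fixes the prime set appearing in condition (3). Since $I$ is open, every prime containing $I$ has height at least one; together with $\Ass_R R/I\subseteq \rmH_1(R)$ this forces every minimal prime of $I$ to have height exactly one. Consequently a height-one prime $\fkp\supseteq I$ must equal whatever minimal prime of $I$ sits below it, so $V(I)\cap \rmH_1(R)=\Min_R I$ and hence $\sqrt I=\bigcap_{\fkp\in \Min_R I}\fkp=\bigcap_{\fkp\in V(I)\cap \rmH_1(R)}\fkp$. The same height bookkeeping shows $R/I$ has no embedded primes, giving $\Ass_R R/I=\Min_R I=\Ass_R R/\sqrt I$; in particular the primes in (3) are exactly $\Min_R I$.

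For the equivalences I would isolate one localized statement and prove it once: \emph{if $J\in\calX_R$ admits $c\in W(R)$ with $J=\overline{cR}$ and $\Ass_R R/J\subseteq \rmH_1(R)$, then $J=cR$ if and only if $R_\fkp$ is a DVR for every $\fkp\in\Ass_R R/J$.} Applying this to $(J,c)=(I,a)$ gives $(1)\Leftrightarrow(3)$, and applying it to $(J,c)=(\sqrt I,b)$ gives $(2)\Leftrightarrow(3)$ after the identification $\Ass_R R/\sqrt I=\Ass_R R/I$ from the first paragraph. Two preparatory facts feed the lemma. First, integral closure of ideals commutes with localization, so $\overline{cR}_\fkp=\overline{cR_\fkp}$; if this equals $R_\fkp$ then $1\in\overline{cR_\fkp}$ forces $1\in cR_\fkp$, i.e. $c$ is a unit at $\fkp$. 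Second, because $R$ satisfies $(S_2)$ and $c$ is a non-zerodivisor, each $\fkp\in\Ass_R R/cR$ satisfies $\depth R_\fkp=1$, whence $\mathrm{ht}_R\fkp=1$; thus $\Ass_R R/cR\subseteq \rmH_1(R)$.

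The direction ``DVR $\Rightarrow J=cR$'' I would prove by showing $\Ass_R(J/cR)=\emptyset$. Any $\fkp\in\Ass_R(J/cR)\subseteq\Ass_R R/cR$ has height one and contains $c$; it must lie in $V(J)$, since otherwise $J_\fkp=R_\fkp=\overline{cR_\fkp}$ would make $c$ a unit at $\fkp$, contradicting $c\in\fkp$. Then $\fkp$ is a height-one prime over $J$, so $\fkp\in\Ass_R R/J$, $R_\fkp$ is a DVR, and $J_\fkp=\overline{cR_\fkp}=cR_\fkp$, contradicting $(J/cR)_\fkp\neq 0$. Hence $J=cR$.

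The crux, and the step I expect to be the main obstacle, is the converse: assuming $J=cR$, show $R_\fkp$ is a DVR for each $\fkp\in\Ass_R R/J$. Write $S=R_\fkp$, $\fkn=\fkp S$; then $S$ is a one-dimensional $(S_2)$, hence Cohen--Macaulay, local ring, $cS=J_\fkp$ is integrally closed, and $\fkn\in\Ass_S(S/cS)$. I would exploit the identity $\overline{cS}=c\overline S\cap S$ (already used in the proof of Lemma~\ref{2.4}), which under $\overline{cS}=cS$ reads $\overline S\cap c^{-1}S=S$. From $\fkn\in\Ass_S(S/cS)$ choose $x\in S\setminus cS$ with $\fkn x\subseteq cS$ and put $t=x/c\in\rmQ(S)\setminus S$, so $t\fkn\subseteq S$. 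Now $t\fkn$ is an ideal of $S$: if $t\fkn=S$ then $\fkn$ is invertible, hence principal over the local ring $S$, so $S$ is a DVR; otherwise $t\fkn\subseteq\fkn$, and since $\fkn$ is a finitely generated faithful $S$-module (it contains the non-zerodivisor $c$) the determinant trick yields $t\in\overline S$, whence $t\in\overline S\cap c^{-1}S=S$, contradicting $t\notin S$. Either way $S$ is a DVR, finishing the lemma and the proposition. The delicate point throughout is maintaining the height-one condition, so that every relevant localization is a one-dimensional Cohen--Macaulay ring in which the integral-closure identity and the determinant trick apply.
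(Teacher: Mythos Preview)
Your argument is correct. The overall architecture matches the paper's: first identify $\Ass_R R/I=\Min_R I=V(I)\cap\rmH_1(R)=\Ass_R R/\sqrt I$, then reduce both $(1)\Leftrightarrow(3)$ and $(2)\Leftrightarrow(3)$ to a single statement of the form ``$\overline{cR}=cR$ if and only if $R_\fkp$ is a DVR for every $\fkp\in\Ass_R R/cR$'' applied to $c=a$ and $c=b$. The paper also checks, exactly as you do, that $(S_2)$ forces $\Ass_R R/aR=\Ass_R R/bR=\Ass_R R/I$.

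The genuine difference is in how that key equivalence is obtained. The paper simply invokes \cite[Theorem~1.1]{G} (Goto's criterion for integral closedness of complete-intersection ideals) as a black box, so its proof is a few lines of bookkeeping on associated primes. You instead reprove the $r=1$ case of Goto's theorem from scratch: the direction ``DVR $\Rightarrow cR$ integrally closed'' via an associated-prime argument on $J/cR$, and the harder direction via the classical $t=x/c$ trick together with $\overline{cS}=c\overline{S}\cap S$ and Cayley--Hamilton on $\fkn$. Your route is longer but entirely self-contained, and it makes transparent exactly which ingredients ($(S_2)$, height one, the integral-closure identity for principal ideals) are doing the work; the paper's route is shorter but depends on an external reference.
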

\begin{proof}
Because $\Ass_R R/I \subseteq {\rmH}_1(R)$, we have $\Ass_R R/I =V(I)\cap {\rmH}_1(R)$. Thus, $\sqrt{I}=\underset{\fkp\in V(I)\cap {\rmH}_1(R)}{\bigcap}\fkp$. By using \cite[Theorem 1.1]{G}, it is sufficient to show that $\Ass_R R/aR=\Ass_R R/I=\Ass_R R/bR$. 
In fact, let $\fkp\in \Ass_R R/aR$. Because $\depth R_{\fkp}=1$ and $R$ satisfies $(S_2)$, we have $\fkp\in{\rmH}_1(R)$ and
$$\Ass_R R/aR=V(aR)\cap {\rmH}_1(R)=V(I)\cap {\rmH}_1(R)=\Ass_R R/I.$$ 
Similarly, we have 
$$\Ass_R R/bR=\Ass_R R/\sqrt{I}=V(\sqrt{I})\cap {\rmH}_1(R)=V(I)\cap {\rmH}_1(R),$$
which yields $\Ass_R R/aR=\Ass_R R/I=\Ass_R R/bR$, as desired.
\end{proof}

We will use this proposition in Section 4.


\section{Main result: Equation $I=\sqrt{I_0}\sqrt{I_1}\cdots\sqrt{I_n}$}\label{sec5}

In the following sections, 
we assume that $R$ is an Arf ring. We fix $I\in\calX_R$ and set $B=I:I$. Then, if $I\neq R$, we can write 
$$\sqrt{I}=\underset{\fkp\in V(I)}{\bigcap}\fkp=\underset{\fkp\in V(I)}{\prod}\fkp.$$ 

\begin{defn}\label{4.1}
We set
$$R_{(I, 1)}=\sqrt{I}:\sqrt{I} ~~\text{and} ~~ I_1=I:\sqrt{I}.$$

By Corollary \ref{2.5} and Theorem \ref{3.3}, $R_{(I, 1)}$ is also an Arf ring, $I_1\in \calX_{R_{(I, 1)}}$, and $I=\sqrt{I}\cdot I_1$. 
For each $n\ge 0$, we define recursively


$$
R_{(I, n)} = 
\begin{cases}
R & (n=0)\\
\sqrt{I_{n-1}}:\sqrt{I_{n-1}}=[R_{(I, n-1)}]_{(I_{n-1}, 1)} & (n>0),
\end{cases}
$$
$$
I_n =
\begin{cases}
I & (n=0)\\
I_{n-1}:\sqrt{I_{n-1}}=[I_{n-1}]_1 & (n>0).
\end{cases} 
$$

Every $R_{(I, n)}$ is also an Arf ring, and we have the tower 
$$R=R_{(I, 0)}\subseteq R_{(I, 1)}\subseteq \cdots \subseteq R_{(I, n)}\subseteq \cdots \subseteq \overline{R}$$   
of Arf rings. We also have $I_n \in\calX_{R_{(I, n)}}$ and 
$$I_n=\sqrt{I_n}\cdot I_{n+1}.$$
Note that $I_n$ is also an ideal in $R_{(I, n+1)}$, $I_n\subseteq I_{n+1}$,  and $I_n\cong I_{n+1}$ as $R_{(I, n+1)}$-modules. 

\end{defn}

Summing the above discussion, we have the following:

\begin{thm}\label{4.3}

$I=\left[\sqrt{I_0}\sqrt{I_1}\cdots\sqrt{I_n}\right]\cdot I_{n+1}$ for any $n\ge0$.

\end{thm}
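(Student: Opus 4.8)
The plan is to prove the identity by induction on $n$, using as the sole engine the relation
$$
I_m=\sqrt{I_m}\cdot I_{m+1}\qquad(m\ge 0),
$$
which is already recorded in Definition \ref{4.1}. I would first recall \emph{why} that relation holds, so that the induction rests on firm ground: since each $R_{(I,m)}$ is an Arf ring and $\sqrt{I_m}\in\calX_{R_{(I,m)}}$, Theorem \ref{2.3} supplies an element $b\in\sqrt{I_m}$ with $(\sqrt{I_m})^2=b\sqrt{I_m}$. Applying Theorem \ref{3.3}(2) inside $R_{(I,m)}$ to the pair $I_m\subseteq\sqrt{I_m}$ (with $B=\sqrt{I_m}:\sqrt{I_m}=R_{(I,m+1)}$) then gives $I_m=\sqrt{I_m}\cdot(I_m:\sqrt{I_m})=\sqrt{I_m}\cdot I_{m+1}$, as claimed.

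With this in hand the induction is immediate. The base case $n=0$ is exactly $I=I_0=\sqrt{I_0}\cdot I_1$. For the inductive step, assume $I=\left[\sqrt{I_0}\sqrt{I_1}\cdots\sqrt{I_n}\right]\cdot I_{n+1}$; substituting $I_{n+1}=\sqrt{I_{n+1}}\cdot I_{n+2}$ and using associativity of the product of $R$-submodules of $\rmQ(R)$ yields
$$
I=\left[\sqrt{I_0}\sqrt{I_1}\cdots\sqrt{I_n}\right]\cdot\sqrt{I_{n+1}}\cdot I_{n+2}
=\left[\sqrt{I_0}\sqrt{I_1}\cdots\sqrt{I_{n+1}}\right]\cdot I_{n+2},
$$
which is the statement for $n+1$. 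This closes the induction.

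There is no genuine obstacle here; the theorem is a telescoping consequence of the recursive construction, exactly as the phrase ``summing the above discussion'' suggests. The only point demanding care is bookkeeping: the factors $\sqrt{I_0},\sqrt{I_1},\dots$ are ideals living in the successive rings $R_{(I,0)}\subseteq R_{(I,1)}\subseteq\cdots$ of the tower, so I would stress that all products are formed as $R$-submodules of the common total ring of fractions $\rmQ(R)$, where associativity and the identities above are unambiguous. Once this convention is fixed, the argument is a clean two-line induction.
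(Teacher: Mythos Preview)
Your proof is correct and is precisely the argument the paper has in mind: the theorem is stated immediately after Definition \ref{4.1} with only the phrase ``summing the above discussion,'' and your induction from the recursive identity $I_m=\sqrt{I_m}\cdot I_{m+1}$ is exactly that summation. Your explicit remark that the products are taken as $R$-submodules of $\rmQ(R)$ is a helpful clarification the paper leaves implicit.
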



To delete $I_{n+1}$ in the equation, we require the following works. 

\begin{lem}\label{5.1}
$I_n:I_n=I_{n+1}:I_{n+1}$ and $R_{(I, n)} \subseteq B$ for any $n\ge 0$.

\end{lem}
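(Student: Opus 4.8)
The plan is to prove the two claims of Lemma~\ref{5.1} in order, starting with the identity $I_n:I_n=I_{n+1}:I_{n+1}$ and then deducing the inclusion $R_{(I,n)}\subseteq B$. For the first identity, recall from Definition~\ref{4.1} that $I_{n+1}=I_n:\sqrt{I_n}$ and that Theorem~\ref{3.3}(2) gives $I_{n+1}\cong I_n$ as $R_{(I,n+1)}$-modules, with $I_n=\sqrt{I_n}\cdot I_{n+1}$. The key observation I would exploit is that for any $I\in\calX_R$ and any $a\in I\cap W(R)$ realizing $I=\overline{aR}$ with $I^2=aI$, the multiplication-by-$1/b$ map (where $\sqrt{I_n}=\overline{bR}$, $\sqrt{I_n}^2=b\sqrt{I_n}$) is an isomorphism $I_n\xrightarrow{\sim}I_{n+1}=\frac{I_n}{b}$ of modules over the ring $R_{(I,n+1)}=\sqrt{I_n}:\sqrt{I_n}$. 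Since the endomorphism ring (i.e. the idealizer $M:M$) of a module is unchanged under isomorphism of $R_{(I,n+1)}$-modules, and since $I_n:I_n$ computed inside $\rmQ(R)$ coincides with the endomorphism ring, I would conclude $I_n:I_n=\frac{I_n}{b}:\frac{I_n}{b}=I_{n+1}:I_{n+1}$. The cleanest route is to note directly that $I_{n+1}=\frac{I_n}{b}$ for the element $b$, so $I_{n+1}:I_{n+1}=\frac{I_n}{b}:\frac{I_n}{b}$, and scaling by $b$ in the numerator and denominator cancels, giving exactly $I_n:I_n$.

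Once this single-step identity is established, the full statement $I_n:I_n=I_{n+1}:I_{n+1}$ for the successive terms extends by an immediate induction along the tower: applying the base case with $n$ replaced by each intermediate index shows all the colon rings $I_0:I_0, I_1:I_1, \ldots$ are equal. In particular $I_n:I_n=I_0:I_0=I:I=B$ for every $n\ge 0$. This chained equality is really the heart of the lemma, because it identifies the ambient idealizer $B$ as a single fixed ring independent of the level $n$.

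For the second claim $R_{(I,n)}\subseteq B$, I would argue as follows. By Definition~\ref{4.1}, $R_{(I,n)}=\sqrt{I_{n-1}}:\sqrt{I_{n-1}}$, and more generally $I_{n-1}\subseteq R_{(I,n)}$ is an ideal with $I_{n-1}\in\calX_{R_{(I,n-1)}}$. The tower $R=R_{(I,0)}\subseteq R_{(I,1)}\subseteq\cdots$ together with the relation $I_{n-1}\subseteq I_n$ and $I_n:I_n=B$ should give the inclusion. Concretely, since $I_n:I_n=B$ and $I_n$ is an ideal of $R_{(I,n+1)}$ containing $R_{(I,n)}$ as a subring acting on it (every element of $R_{(I,n)}$ multiplies $I_n$ into $I_n$, as $I_n$ is an $R_{(I,n)}$-module), every $c\in R_{(I,n)}$ satisfies $c\cdot I_n\subseteq I_n$, hence $c\in I_n:I_n=B$. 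This yields $R_{(I,n)}\subseteq B$ directly.

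The step I expect to be the main obstacle is verifying cleanly that $I_n$ is an $R_{(I,n)}$-module in the sense needed—that is, that $R_{(I,n)}\cdot I_n\subseteq I_n$—so that the inclusion into the idealizer $I_n:I_n$ is legitimate. One must track carefully that although $I_n$ was originally defined as an ideal of $R_{(I,n)}$ (being $I_{n-1}:\sqrt{I_{n-1}}\in\calX_{R_{(I,n)}}$, where $R_{(I,n)}=\sqrt{I_{n-1}}:\sqrt{I_{n-1}}$), the relevant colon ring in the first part is computed over $\rmQ(R)$, and these must be reconciled. Proposition~\ref{3.2} is the tool here: applied to $I_n\subseteq I_0=I$ (using $I_n\supseteq I_0$ is the wrong direction, so one uses instead the ascending chain $I_0\subseteq I_1\subseteq\cdots\subseteq I_n$ together with Proposition~\ref{3.2}'s contravariance) it gives $I_0:I_0\supseteq I_n:I_n$, which combined with the equality from the first part pins everything to $B$. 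The bookkeeping of which ideal is contained in which, and correspondingly which idealizer contains which, is the delicate point; once the containments are oriented correctly the algebra is routine.
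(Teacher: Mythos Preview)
Your proposal is correct and follows essentially the same approach as the paper: the equality $I_n:I_n=I_{n+1}:I_{n+1}$ comes from the $R_{(I,n+1)}$-module isomorphism $I_n\cong I_{n+1}=\frac{I_n}{b}$, and the inclusion $R_{(I,n)}\subseteq B$ then follows from the chain of equalities $I_n:I_n=\cdots=I_0:I_0=B$ together with the trivial containment of a ring in the idealizer of one of its ideals. The paper phrases the second step via Proposition~\ref{3.2} applied to $I_{n-1}\subseteq\sqrt{I_{n-1}}$, yielding $R_{(I,n)}=\sqrt{I_{n-1}}:\sqrt{I_{n-1}}\subseteq I_{n-1}:I_{n-1}=B$, while you use $R_{(I,n)}\subseteq I_n:I_n=B$ directly; your final paragraph's worries are unnecessary, since $I_n\in\calX_{R_{(I,n)}}$ by Definition~\ref{4.1} already makes $I_n$ an $R_{(I,n)}$-ideal.
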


\begin{proof}
For any $n\ge 0$, because $I_n\cong I_{n+1}$ as $R_{(I, n+1)}$-modules, we have $I_n:I_n=I_{n+1}:I_{n+1}$. When $n=0$, it is clear that $R_{(I, 0)}=R\subseteq B$. If $n>0$, because $I_{n-1}\subseteq \sqrt{I_{n-1}}$ in $R_{(I, n-1)}$, we obtain 
$$R_{(I, n)}=\sqrt{I_{n-1}}:\sqrt{I_{n-1}}\subseteq I_{n-1}:I_{n-1}=\cdots =I_0:I_0=B$$
by Proposition \ref{3.2}.
\end{proof}

\begin{prop}\label{5.2}
We can choose $n\ge 0$ such that $R_{(I, n)}=R_{(I, \ell)}$ for every $\ell\ge n$.
\end{prop}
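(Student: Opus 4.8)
The plan is to realize the entire tower inside one fixed finitely generated $R$-module and then appeal to the ascending chain condition.

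First I would locate all the rings in a common module. By Lemma \ref{5.1} we have $R_{(I,n)}\subseteq B$ for every $n\ge 0$, where $B=I:I$. Since each $R_{(I,n)}$ is a subring of $B$ containing $R$, the tower
$$R=R_{(I,0)}\subseteq R_{(I,1)}\subseteq \cdots \subseteq R_{(I,n)}\subseteq \cdots \subseteq B$$
is, viewed as a diagram of $R$-modules, an ascending chain of $R$-submodules of $B$.

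The crucial step is to prove that $B$ is a finitely generated $R$-module. Here I would use that $R$ is Arf: since $I\in\calX_R$, Theorem \ref{2.3} furnishes an element $a\in I$ with $I^2=aI$, and as $I$ is open we may take $a\in W(R)$ to be a non-zerodivisor reduction, so that $I$ is a stable ideal. Lemma \ref{2.1} then identifies $B=I:I=\frac{I}{a}$. Because $a\in W(R)$, multiplication by $a$ is an injective $R$-linear map carrying $\frac{I}{a}$ onto $I$, whence $B\cong I$ as $R$-modules. As $R$ is Noetherian and $I$ is an ideal of $R$, the module $I$ is finitely generated; therefore $B$ is a finitely generated, hence Noetherian, $R$-module.

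Finally I would conclude by the ascending chain condition: an increasing chain of $R$-submodules of a Noetherian $R$-module stabilizes, so there is some $n\ge 0$ with $R_{(I,n)}=R_{(I,\ell)}$ for all $\ell\ge n$, which is the assertion. The only nonformal step is the module-finiteness of $B$ over $R$, and I expect it to be the main obstacle. It is exactly here that the Arf hypothesis is indispensable: without the stability of $I$ supplied by Theorem \ref{2.3} one would be forced to argue separately that $\overline{R}$ is module-finite over $R$, whereas the identification $B\cong I$ lets us read off finiteness directly from the Noetherian ideal $I$. Once $B$ is known to be a Noetherian $R$-module, the stabilization of the tower is automatic.
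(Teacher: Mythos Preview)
Your proof is correct and follows exactly the same approach as the paper: use Lemma \ref{5.1} to place the entire tower inside $B=I:I$, observe that $B$ is a Noetherian $R$-module, and conclude by the ascending chain condition. The paper's proof is a single sentence asserting that $B$ is a Noetherian $R$-module; you supply the justification (via Theorem \ref{2.3} and Lemma \ref{2.1}, $B=\tfrac{I}{a}\cong I$ as $R$-modules) that the paper leaves implicit.
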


\begin{proof}
This follows from Lemma \ref{5.1}, because  $B$ is a Noetherian $R$-module.
\end{proof}

\begin{lem}\label{5.3}
For each $n\ge0$, the following conditions are equivalent:
\begin{enumerate}[$(1)$]
\item
$R_{(I, n)}=R_{(I, n+1)}$.
\item
$I_n$ is a principal ideal of $R_{(I, n)}$.
\item
$\sqrt{I_n}$ is a principal ideal of $R_{(I, n)}$.  
\end{enumerate}
\end{lem}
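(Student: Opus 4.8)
The plan is to prove the cycle $(1)\Rightarrow(3)\Rightarrow(2)\Rightarrow(1)$, writing $S=R_{(I, n)}$ and exploiting that, since $S$ is an Arf ring, both $I_n$ and $\sqrt{I_n}$ lie in $\calX_S$ and are stable by Theorem \ref{2.3}. The basic tool throughout is Lemma \ref{2.1}, which lets me identify $R_{(I, n+1)}=\sqrt{I_n}:\sqrt{I_n}=S^{\sqrt{I_n}}=\frac{\sqrt{I_n}}{b}$ for any reduction $b\in\sqrt{I_n}$ (i.e. one with $(\sqrt{I_n})^2=b\sqrt{I_n}$), and likewise $I_n:I_n=S^{I_n}$.

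For $(1)\Rightarrow(3)$, I would fix such a reduction $b$ of $\sqrt{I_n}$ and use Lemma \ref{2.1} to write $S=R_{(I, n+1)}=\frac{\sqrt{I_n}}{b}$; since $b\in\sqrt{I_n}$ always gives $S\subseteq\frac{\sqrt{I_n}}{b}$, the equality forces $\sqrt{I_n}=bS$, so $\sqrt{I_n}$ is principal. For $(2)\Rightarrow(1)$, if $I_n=cS$ is principal then $c\in W(S)$ because $I_n\in\calF_S$, whence $I_n:I_n=cS:cS=S$; applying Proposition \ref{3.2} to the inclusion $I_n\subseteq\sqrt{I_n}$ yields $S=I_n:I_n\supseteq\sqrt{I_n}:\sqrt{I_n}=R_{(I, n+1)}$, and combined with the standing inclusion $R_{(I, n)}\subseteq R_{(I, n+1)}$ this gives $R_{(I, n)}=R_{(I, n+1)}$, which is $(1)$.

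The substantive step is $(3)\Rightarrow(2)$, which I would carry out by localizing at the maximal ideals of $S$. Write $\sqrt{I_n}=bS$ with $b\in W(S)$. Since $I_n$ is open, $V(I_n)$ consists of maximal ideals and $\sqrt{I_n}=\bigcap_{M'\in V(I_n)}M'$; localizing at a fixed $M\in V(I_n)$ kills the other (finitely many) components, so $\sqrt{I_n}S_M=MS_M=bS_M$ is principal. As $S_M$ is a one-dimensional Cohen--Macaulay local ring with principal maximal ideal generated by a non-zerodivisor, it is a DVR, so $I_nS_M$ is a power of $bS_M$ and in particular principal; for $M\notin V(I_n)$ one has $I_nS_M=S_M$ trivially. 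Hence $I_n$ is locally principal at every maximal ideal and contains a non-zerodivisor, so it is invertible, and since the Picard group of the semi-local ring $S$ is trivial, $I_n$ is principal.

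I expect $(3)\Rightarrow(2)$ to be the main obstacle: passing from principality of the radical to principality of $I_n$ itself is exactly where the one-dimensional Cohen--Macaulay hypothesis is indispensable, since it is what promotes each $S_M$ to a DVR and lets me reassemble $I_n$ from its local data via triviality of $\operatorname{Pic}(S)$. By contrast, $(1)\Leftrightarrow(3)$ and $(2)\Rightarrow(1)$ are formal consequences of the stability and colon-ideal results already established in Sections \ref{sec2} and \ref{sec3}.
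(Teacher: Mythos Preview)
Your argument is correct. The implication $(1)\Rightarrow(3)$ matches the paper's proof exactly: both pick a reduction $a_n$ of $\sqrt{I_n}$, write $R_{(I,n+1)}=\frac{\sqrt{I_n}}{a_n}$ via Lemma~\ref{2.1}, and read off principality of $\sqrt{I_n}$ from $R_{(I,n)}=R_{(I,n+1)}$. The differences lie elsewhere. For $(2)\Leftrightarrow(3)$ the paper simply invokes Proposition~\ref{3.4}, whose proof in turn rests on Goto's criterion \cite[Theorem~1.1]{G} for when $\overline{aR}=aR$; your localization argument for $(3)\Rightarrow(2)$ is essentially a self-contained unwinding of that criterion in the one-dimensional semi-local setting, replacing the citation by the explicit chain ``$\sqrt{I_n}$ principal $\Rightarrow$ $S_M$ is a DVR for $M\in V(I_n)$ $\Rightarrow$ $I_n$ locally principal $\Rightarrow$ $I_n$ principal via $\operatorname{Pic}(S)=0$''. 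Your $(2)\Rightarrow(1)$ via Proposition~\ref{3.2} is also a different (and slightly slicker) route than the paper's, which instead passes through $(3)$ using Proposition~\ref{3.4} again. The net effect is that your proof is more self-contained---it avoids the external reference to \cite{G}---at the cost of spelling out the DVR/Picard-group step, whereas the paper's version is shorter because that work has been packaged into Proposition~\ref{3.4}.
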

\begin{proof}
For each $n\ge 0$, we can take $a_n\in W(R_{(I, n)})$ such that $\sqrt{I_n}=\overline{a_nR_{(I, n)}}$, because $R_{(I, n)}$ is an Arf ring. Then, we have
$$R_{(I, n+1)}=\sqrt{I_n}:\sqrt{I_n}=\frac{\sqrt{I_n}}{a_n}.$$
Therefore, $R_{(I, n)}=R_{(I, n+1)}$ if and only if $\sqrt{I_n}=a_nR_{(I, n)}$, which implies that $(1)\Leftrightarrow (3)$. $(2)\Leftrightarrow(3)$ follows from Proposition \ref{3.4}.  
\end{proof}

\begin{prop}\label{5.4}
Let $n\ge 0$. If $R_{(I, n)}=R_{(I, n+1)}$, then $R_{(I, \ell)}=B$ for any $\ell \ge n$.
\end{prop}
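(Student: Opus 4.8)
The plan is to prove the sharper statement that the hypothesis already forces $R_{(I,n)}=B$, and then to deduce $R_{(I,\ell)}=B$ for all $\ell\ge n$ by trapping $R_{(I,\ell)}$ between $R_{(I,n)}$ and $B$. So the real work is concentrated in the single equality $R_{(I,n)}=B$.

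To establish that equality I would argue as follows. The hypothesis $R_{(I,n)}=R_{(I,n+1)}$ triggers Lemma \ref{5.3}, which tells us that $I_n$ is a principal ideal of $R_{(I,n)}$, say $I_n=cR_{(I,n)}$. Since $I_n\in\calX_{R_{(I,n)}}\subseteq\calF_{R_{(I,n)}}$, the ideal $I_n$ contains a non-zerodivisor on $R_{(I,n)}$; writing such an element as $cr$ shows at once that the generator $c$ must itself lie in $W(R_{(I,n)})$, for a factor of a non-zerodivisor is a non-zerodivisor. The crux is then the observation that the endomorphism ring of a principal ideal generated by a non-zerodivisor collapses onto the base ring: because $c$ cancels, $I_n:I_n=cR_{(I,n)}:cR_{(I,n)}=R_{(I,n)}$. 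On the other hand, Lemma \ref{5.1} (together with the chain $I_n:I_n=I_{n-1}:I_{n-1}=\cdots=I_0:I_0$ appearing in its proof) identifies $I_n:I_n$ with $I:I=B$. Comparing the two expressions for $I_n:I_n$ yields $R_{(I,n)}=B$.

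It then remains only to pass from level $n$ to all later levels. For any $\ell\ge n$ the tower in Definition \ref{4.1} is increasing, so $B=R_{(I,n)}\subseteq R_{(I,\ell)}$, while Lemma \ref{5.1} supplies the reverse inclusion $R_{(I,\ell)}\subseteq B$; hence $R_{(I,\ell)}=B$, which is exactly the claim. I expect no serious obstacle here: the only genuinely computational point is the collapse $I_n:I_n=R_{(I,n)}$ for a principal ideal, paired with the short verification that its generator is a non-zerodivisor. Everything else is an assembly of the already-proven Lemmas \ref{5.1} and \ref{5.3} with the monotonicity of the constructed tower of Arf rings.
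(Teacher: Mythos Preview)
Your proof is correct and follows essentially the same line as the paper: both invoke Lemma~\ref{5.3} to see that $I_n$ is principal, compute $I_n:I_n=R_{(I,n)}$, and identify this with $B$ via the chain $I_n:I_n=\cdots=I_0:I_0$ from Lemma~\ref{5.1}. The only cosmetic difference is in passing to $\ell>n$: the paper inducts forward by showing $I_{n+1}\cong I_n$ is again principal (hence $R_{(I,n+1)}=R_{(I,n+2)}$, etc.), whereas your sandwich $B=R_{(I,n)}\subseteq R_{(I,\ell)}\subseteq B$ is a clean shortcut.
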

\begin{proof}
Suppose that $R_{(I, n)}=R_{(I, n+1)}$. Then, $I_n\cong R_{(I, n)}=R_{(I, n+1)}$, because $I_n$ is a principal ideal of $R_{(I, n)}$ by Lemma \ref{5.3}.
Therefore, we have
$$R_{(I, n)}=R_{(I, n)}:R_{(I, n)}=I_n:I_n=B.$$
In contrast, because $I_n\cong I_{n+1}$ as $R_{(I, n+1)}$-modules, we also have $I_{n+1}\cong R_{(I, n+1)}$ as $R_{(I, n+1)}$-modules, which implies that $R_{(I, n+1)}=R_{(I, n+2)}$ by Lemma \ref{5.3}. Thus, we obtain $R_{(I, \ell)}=R_{(I, n)}=B$ for all $\ell \ge n$.
\end{proof}

We then have the following:

\begin{thm}\label{5.5}
We can choose $n\ge0$ such that $I_{\ell}=R_{(I, \ell)}=B$ for every $\ell\ge n$.
\end{thm}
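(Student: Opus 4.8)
The plan is to settle the ring equality $R_{(I,\ell)}=B$ first, and then upgrade it to the ideal equality $I_\ell=B$. For the ring part I would invoke Proposition \ref{5.2} to produce an integer $n\ge 0$ with $R_{(I,n)}=R_{(I,n+1)}$ (indeed with $R_{(I,\ell)}$ constant for all $\ell\ge n$). Proposition \ref{5.4} then immediately promotes this to $R_{(I,\ell)}=B$ for every $\ell\ge n$, so from this level on the entire tower has collapsed onto $B$. Hence only the claim $I_\ell=B$ remains.

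For the ideal part I would exploit that the $I_\ell$ form an ascending chain. By Definition \ref{4.1} we have $I_\ell\subseteq I_{\ell+1}$, and for $\ell\ge n$ each $I_\ell$ is an ideal of the single ring $B$, hence an $R$-submodule of $B$. Since $B$ is a Noetherian $R$-module (as used in Proposition \ref{5.2}), the chain $I_n\subseteq I_{n+1}\subseteq\cdots$ stabilizes: there is $m\ge n$ with $I_\ell=I_m$ for all $\ell\ge m$. Substituting $I_{m+1}=I_m$ into the factorization $I_m=\sqrt{I_m}\cdot I_{m+1}$ of Definition \ref{4.1} produces the fixed-point relation $I_m=\sqrt{I_m}\cdot I_m$ inside $R_{(I,m)}=B$. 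Now Lemma \ref{5.3} applies at level $m$ (because $R_{(I,m)}=R_{(I,m+1)}$), so $I_m=cB$ for a non-zerodivisor $c$; the relation becomes $cB=c\,\sqrt{I_m}$, and cancelling $c$ in $\rmQ(R)$ yields $\sqrt{I_m}=B$. The radical of $I_m$ equals the whole ring exactly when $I_m=B$, so $I_m=B$, whence $I_\ell=R_{(I,\ell)}=B$ for all $\ell\ge m$. Taking $m$ as the desired $n$ completes the proof.

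I expect the genuine difficulty to lie in passing from the fixed-point relation $I_m=\sqrt{I_m}\cdot I_m$ to $I_m=B$. One is tempted to quote Nakayama's lemma, but $\sqrt{I_m}$ need not be contained in the Jacobson radical of $B$, so a naive application is illegitimate. What rescues the cancellation is that $I_m$ contains a non-zerodivisor: either the principality supplied by Lemma \ref{5.3}, or the determinant trick (which furnishes $a\in\sqrt{I_m}$ with $(1-a)I_m=0$ and then forces $1-a=0$ against a non-zerodivisor of $I_m$), converts $I_m=\sqrt{I_m}\cdot I_m$ into $\sqrt{I_m}=B$. A secondary point to keep straight is that the product $\sqrt{I_m}\cdot I_{m+1}$ of Definition \ref{4.1}, formed a priori in $R_{(I,m+1)}$, coincides with the product computed in $R_{(I,m)}$ once $R_{(I,m)}=R_{(I,m+1)}$, so that Lemma \ref{5.3} and the cancellation may all be performed in the single ring $B$.
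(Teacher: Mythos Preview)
Your proposal is correct and follows essentially the same route as the paper: first collapse the ring tower to $B$ via Propositions \ref{5.2} and \ref{5.4}, then use Noetherianity of $B$ on the ascending chain $I_m\subseteq I_{m+1}\subseteq\cdots$ together with the factorization $I_\ell=\sqrt{I_\ell}\cdot I_{\ell+1}$ to force $I_\ell=B$ eventually. The only cosmetic difference is that the paper phrases the last step contrapositively (if every $I_\ell\subsetneq B$ then $\sqrt{I_\ell}\subsetneq B$ and hence $I_\ell=\sqrt{I_\ell}\cdot I_{\ell+1}\subsetneq I_{\ell+1}$, producing an infinite strictly ascending chain), whereas you stabilize first and then cancel; your explicit appeal to Lemma \ref{5.3} or the determinant trick in fact supplies the justification the paper leaves implicit in the inequality $\sqrt{I_\ell}\cdot I_{\ell+1}\subsetneq I_{\ell+1}$.
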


\begin{proof}
We take $m\ge 0$ such that $R_{(I, m)}=R_{(I, m+1)}$. Then, $R_{(I, \ell)}=B$ for any $\ell\ge m$ by Proposition \ref{5.4}. Additionally, because $I_\ell=\sqrt{I_\ell}\cdot I_{\ell+1}$, we have
$$I_m\subseteq I_{m+1}\subseteq \cdots \subseteq I_\ell \subseteq \cdots \subseteq B.$$
If $I_\ell \subsetneq B$ for every $\ell\ge m$, we also have $\sqrt{I_{\ell}}\subsetneq B$, so that
$$I_\ell=\sqrt{I_\ell}\cdot I_{\ell+1}\subsetneq I_{\ell +1}.$$
This is impossible, because $B$ is a Noetherian ring.
\end{proof}

Consequently, we get the following, which is the main result of this paper.

\begin{cor}\label{5.6}
We can choose $n\ge0$ such that $I=\sqrt{I_0}\sqrt{I_1}\cdots\sqrt{I_n}$. 
\end{cor}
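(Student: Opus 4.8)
The plan is to read the corollary off from the two preceding theorems: the factorization with remainder in Theorem~\ref{4.3} and the stabilization result in Theorem~\ref{5.5}. Theorem~\ref{4.3} already delivers $I=[\sqrt{I_0}\sqrt{I_1}\cdots\sqrt{I_n}]\cdot I_{n+1}$ for \emph{every} $n\ge 0$, so the whole task reduces to choosing $n$ in such a way that the trailing factor $I_{n+1}$ disappears, i.e. so that multiplying the product $\sqrt{I_0}\cdots\sqrt{I_n}$ by $I_{n+1}$ leaves it unchanged.

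First I would invoke Theorem~\ref{5.5} to fix an $n\ge 0$ for which $I_{n+1}=R_{(I,n+1)}=B$; any $n$ at or beyond the stabilization threshold produced by Theorem~\ref{5.5} works, since there $I_\ell=R_{(I,\ell)}=B$ for all sufficiently large $\ell$. Substituting $I_{n+1}=B$ into Theorem~\ref{4.3} then gives
$$I=[\sqrt{I_0}\sqrt{I_1}\cdots\sqrt{I_n}]\cdot B.$$
It remains only to verify the absorption identity $[\sqrt{I_0}\cdots\sqrt{I_n}]\cdot B=\sqrt{I_0}\cdots\sqrt{I_n}$, after which the claimed equality $I=\sqrt{I_0}\sqrt{I_1}\cdots\sqrt{I_n}$ follows at once.

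This absorption step is the only part carrying any content, and it is immediate from the construction. By definition $R_{(I,n+1)}=\sqrt{I_n}:\sqrt{I_n}$, so the equality $R_{(I,n+1)}=B$ supplied by Theorem~\ref{5.5} says precisely that $\sqrt{I_n}$ is a module over $B$; equivalently $\sqrt{I_n}\cdot B=\sqrt{I_n}$. Using the associativity and commutativity of the product of $R$-submodules of $\rmQ(R)$, I would then compute
$$[\sqrt{I_0}\cdots\sqrt{I_{n-1}}]\cdot(\sqrt{I_n}\cdot B)=[\sqrt{I_0}\cdots\sqrt{I_{n-1}}]\cdot\sqrt{I_n},$$
which is the desired identity and completes the argument.

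I do not anticipate a genuine obstacle: the substantive work was already done in proving Theorem~\ref{4.3} and Theorem~\ref{5.5}, and the corollary is their formal consequence. The one point demanding care, and the place I would be most attentive, is bookkeeping of which ring each factor $\sqrt{I_i}$ is an ideal in. The factors sit in the ascending tower $R=R_{(I,0)}\subseteq\cdots\subseteq R_{(I,n+1)}=B$, and the absorption hinges exactly on the top ring $R_{(I,n+1)}$ having collapsed to $B$, which is what Theorem~\ref{5.5} guarantees.
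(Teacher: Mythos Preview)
Your proposal is correct and follows essentially the same approach as the paper's proof: invoke Theorem~\ref{5.5} to obtain $I_{n+1}=R_{(I,n+1)}=B$, substitute into Theorem~\ref{4.3}, and then use that $R_{(I,n+1)}=\sqrt{I_n}:\sqrt{I_n}=B$ makes $\sqrt{I_n}$ a $B$-ideal, so the factor $B$ is absorbed. The paper phrases the absorption step as ``$\sqrt{I_n}$ is an ideal of $B$'' rather than writing out the associativity, but the content is identical.
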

\begin{proof}
We choose $n\ge0$ such that $I_{\ell}=R_{(I, \ell)}=B$ for every $\ell\ge n$. Then, because $\sqrt{I_n}$ is an ideal of $B$ and $I_{n+1}=B$, we have
$$I=\left[\sqrt{I_0}\sqrt{I_1}\cdots\sqrt{I_n}\right]\cdot I_{n+1}=\left[\sqrt{I_0}\sqrt{I_1}\cdots\sqrt{I_n}\right]\cdot B=\sqrt{I_0}\sqrt{I_1}\cdots\sqrt{I_n}.$$
\end{proof}

\vspace{0.5em}

\begin{rem}\label{5.7}
Suppose $I\neq R$. We choose $n\ge0$ such that $I_{\ell}=R_{(I, \ell)}=B$ for every $\ell\ge n$ and set
$\calS=\{ i\ge0 \mid I_i\subsetneq R_{(I, i)}  \}$.
Then, $i<n$ holds for any $i\in \calS$, and we have 
$$\calS=\{0, 1, \ldots, q\}$$
where $q=\max \calS$. Hence, we have $\sqrt{I_{i}}\subsetneq R_{(I, i)}$ and 
$$\sqrt{I_{i}}=\underset{\fkp\in V(I_i)}{\prod}\fkp$$
 for any $0\le i\le q$. Therefore, corresponding to the fact that every non-zero ideal in a DVR is the power of the maximal ideal, every ideal in $\calX_R$ can be described as the products of maximal ideals in  rings $R$, $R_{(I, 1)}, R_{(I, 2)}, \cdots, R_{(I, q)}$.

\end{rem}


\section{Case where $R$ and $\overline{R}$ are local rings}\label{sec6}

In this section, we study in detail the local case of Corollary \ref{5.6}. We assume that $(R, \m)$ is an Arf local ring, and the integral closure $\overline{R}$ is also a local ring. 
For each $i\ge0$, we set 
$$A_i =
\begin{cases}
R      & \ \ (i=0)\\
A_{i-1}^{{\frak m}_{i-1}}   & \ \ (i>0)
\end{cases}
$$
where ${\frak m}_{i}= \operatorname{J}(A_{i})$ and ${\frak m}_0={\frak m}$.
Then, every $A_i$ is also a local ring with the maximal ideal $\m_i$.

Let $I\in \calX_R$ and assume that $I\neq R$. For simplicity, we write $R_{(I, i)}=R_i$. Then, because $\sqrt{I}=\m$, we have
$$R_1=\m:\m=A_1.$$
We choose $q\ge0$ such that $\calS=\{ i\ge0 \mid I_i \subsetneq R_i \}=\{ 0, 1, \ldots, q \}$. Then, we also have the following:

\begin{lem}\label{6.1}
$R_i=A_i$ for any $i\in\calS$.

\end{lem}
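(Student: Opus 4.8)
The plan is to argue by induction on $i$, using that $\calS=\{0,1,\ldots,q\}$ is an initial segment, so that $i\in\calS$ with $i\ge 1$ forces $i-1\in\calS$ as well. The base case $i=0$ is immediate, since $R_0=R=A_0$ by definition. For the inductive step, fix $i\in\calS$ with $i\ge 1$ and assume $R_{i-1}=A_{i-1}$; because $i-1\in\calS$, we also have $I_{i-1}\subsetneq R_{i-1}$.

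The crux is to identify $\sqrt{I_{i-1}}$ with the maximal ideal $\m_{i-1}=\operatorname{J}(A_{i-1})$. By the inductive hypothesis, $R_{i-1}=A_{i-1}$ is a one-dimensional Cohen--Macaulay \emph{local} ring with maximal ideal $\m_{i-1}$, and $I_{i-1}\in\calX_{R_{i-1}}$ is a proper open ideal. Being open, $I_{i-1}$ contains a non-zerodivisor $a$; in a one-dimensional Cohen--Macaulay local ring such an $a$ is a system of parameters, so $\sqrt{aR_{i-1}}=\m_{i-1}$. As $aR_{i-1}\subseteq I_{i-1}\subsetneq R_{i-1}$, we conclude $\sqrt{I_{i-1}}=\m_{i-1}$.

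With this in hand the step closes by unwinding the two recursions. Since $A_{i-1}$ is an Arf ring, $\m_{i-1}$ is stable by Theorem \ref{2.3}, whence $A_i=A_{i-1}^{\m_{i-1}}=\m_{i-1}:\m_{i-1}$ by Lemma \ref{2.1}; and Definition \ref{4.1} gives $R_i=R_{(I,i)}=\sqrt{I_{i-1}}:\sqrt{I_{i-1}}$. Substituting $\sqrt{I_{i-1}}=\m_{i-1}$ and $R_{i-1}=A_{i-1}$ yields
$$R_i=\sqrt{I_{i-1}}:\sqrt{I_{i-1}}=\m_{i-1}:\m_{i-1}=A_{i-1}^{\m_{i-1}}=A_i,$$
completing the induction.

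I expect the only delicate point to be the locality input. For a proper open ideal of a merely semi-local one-dimensional ring, $\sqrt{I_{i-1}}$ is only an intersection of some of the maximal ideals, and the identification $\sqrt{I_{i-1}}=\m_{i-1}$ can fail. It is precisely the inductive hypothesis $R_{i-1}=A_{i-1}$, combined with the standing assumption that $\overline{R}$---and hence every intermediate ring $A_{i-1}$---is local, that forces $R_{i-1}$ to be local and so pins $\sqrt{I_{i-1}}$ down to the single maximal ideal $\m_{i-1}$. This is what keeps the two a priori different recursions (driven by $\sqrt{I_{i-1}}$ on one side and by $\operatorname{J}(A_{i-1})$ on the other) locked together throughout $\calS$.
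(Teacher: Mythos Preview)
Your proof is correct and follows essentially the same inductive line as the paper. The only organizational difference is that the paper first observes in one stroke that \emph{every} $R_i$ is local (because $R_i\subseteq\overline{R}$ and $\overline{R}$ is local), and then runs the induction; you instead obtain the locality of $R_{i-1}$ via the inductive hypothesis $R_{i-1}=A_{i-1}$ together with the (already recorded) locality of $A_{i-1}$. Both routes land on $\sqrt{I_{i-1}}=\m_{i-1}$ and hence $R_i=\m_{i-1}:\m_{i-1}=A_i$.
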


\begin{proof}
Because $\overline{R}$ is a local ring, $R_i$ is also a local ring for every $i\ge 0$. Therefore, for each $1\le i \le q$, we have
$$V(I_i)=\Max R_i=\{\m_i \},$$
which yields $R_i=\m_{i-1}:\m_{i-1}=A_i$. 
\end{proof}

Therefore, $\sqrt{I_i}=\m_i\in\Max R_i$ for each $0\le i\le q$ and $I_{\ell}=R_{\ell}$ for any $\ell \ge q+1$. By combining this with Corollary \ref{5.6}, we immediately obtain the following:   

\begin{thm}\label{6.2}
$I=\m_0\m_1\cdots\m_q$.
\end{thm}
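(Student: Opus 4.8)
The plan is to combine the general decomposition of Corollary \ref{5.6} with the structural identifications already established in this section, reducing everything to bookkeeping about the local rings $A_i$. First I would invoke Corollary \ref{5.6} to obtain an index $n\ge 0$ with $I=\sqrt{I_0}\sqrt{I_1}\cdots\sqrt{I_n}$, where each factor $\sqrt{I_i}$ is an ideal of the ring $R_i=R_{(I,i)}$. The key reduction is that factors coming from indices $i\notin\calS$ are trivial: if $i\notin\calS$ then $I_i=R_i$, hence $\sqrt{I_i}=R_i=B$ by the choice of $q$ together with Theorem \ref{5.5}, and multiplying by $B$ changes nothing. Thus the product collapses to $I=\sqrt{I_0}\sqrt{I_1}\cdots\sqrt{I_q}$, where $\calS=\{0,1,\ldots,q\}$.

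Next I would identify each surviving factor with a maximal ideal. By Lemma \ref{6.1}, for every $i\in\calS$ we have $R_i=A_i$, which is a local ring with maximal ideal $\m_i$. For $0\le i\le q$, the proof of Lemma \ref{6.1} shows $V(I_i)=\Max R_i=\{\m_i\}$, so that $\sqrt{I_i}=\m_i$ (using $\sqrt{I_i}=\bigcap_{\fkp\in V(I_i)}\fkp=\m_i$ as noted after the lemma). Substituting these identifications into the collapsed product yields
\begin{equation*}
I=\sqrt{I_0}\sqrt{I_1}\cdots\sqrt{I_q}=\m_0\m_1\cdots\m_q,
\end{equation*}
which is exactly the asserted equation.

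The only subtlety I anticipate is matching the index $n$ produced by Corollary \ref{5.6} with the index $q=\max\calS$ fixed in this section, and confirming that the remaining factors genuinely vanish rather than merely becoming principal. This is handled cleanly because Theorem \ref{5.5} guarantees $I_\ell=R_\ell=B$ for all $\ell\ge n$, so once we pass beyond $\calS$ every factor equals $B$ and acts as an identity in the product (recall each $\sqrt{I_i}$ lives inside $B$). Since $B$ is a Noetherian ring and $\sqrt{I_i}\subseteq B$ for every $i$, the finite product is well-defined and no genuine factor is lost. Everything else is a direct substitution, so I expect no real obstacle beyond this indexing check.
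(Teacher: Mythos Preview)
Your proposal is correct and follows essentially the same route as the paper. The paper gives no formal proof beyond the sentence ``Therefore, $\sqrt{I_i}=\m_i\in\Max R_i$ for each $0\le i\le q$ and $I_{\ell}=R_{\ell}$ for any $\ell \ge q+1$. By combining this with Corollary \ref{5.6}, we immediately obtain the following,'' and your argument is precisely the detailed version of that remark: apply Corollary \ref{5.6}, discard the trivial tail factors, and identify each surviving $\sqrt{I_i}$ with $\m_i$ via Lemma \ref{6.1}. One minor sharpening: the reason the trailing factors $\sqrt{I_i}=B$ for $i>q$ are absorbed is not merely that ``each $\sqrt{I_i}$ lives inside $B$,'' but that $\sqrt{I_q}$ is an ideal of $R_{q+1}=B$ (as noted in Definition \ref{4.1}), so $\sqrt{I_q}\cdot B=\sqrt{I_q}$.
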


\begin{rem}\label{6.3}
The decomposition of Theorem \ref{6.2} can be considered to be a generalization of the properties of DVRs for integrally closed ideals in Arf rings. Indeed, suppose that $R$ is integrally closed. Then, with our assumption, $R$ is a DVR (of course, an Arf ring). It is well known that every nonzero ideal $I$ can be written as $I=\m^{q+1}$, where $q=\max \calS$. This is the case where $\m=\m_0=\m_1=\cdots=\m_q$ in Theorem \ref{6.2}. 

\end{rem}

We assume that $\overline{R}$ is a  finitely generated $R$-module. Then, $\overline{R}$ is a DVR. Because $R$ is an Arf ring, we have 
$$\underset{i\ge0}{\bigcup} A_i =\overline{R}$$
using Theorem \ref{2.7}. Therefore, there exists an integer $N\ge 0$ such that 
$$R=A_0\subseteq A_1\subseteq \cdots \subseteq A_N=\overline{R}.$$ 
Let $I\in\calX_R$ and assume that $I\neq R$. We set $q=\max \calS$. If $q\le N$, then $I=\m_0\m_1\cdots\m_q$, so that there are only finite possibilities for $I$. 

Suppose that $q>N$. Then, $R_i=A_i=A_N=\overline{R}$ for any $N\le i\le q$, so that 
$$B=I:I=R_q=\overline{R}.$$
Therefore, because $I$ is also an ideal of $\overline{R}$, we have 
$$I=\m_0\cdots\m_{N-1}\m_{N}^{q+1-N}= a\overline{R} $$
for some $a\in I$. 

Consequently, we can completely describe every $I\in\calX_R$ when $\overline{R}$ is a finitely generated $R$-module. We obtain the following corollary: 

\begin{cor}\label{6.4}
Suppose that $\overline{R}$ is a finitely generated $R$-module. Then, the set
$$\{I\in \calX_R \mid I~~\text{is not an ideal of}~~\overline{R}  \}$$
is finite. 
\end{cor}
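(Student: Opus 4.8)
The plan is to show that every $I \in \calX_R$ that is \emph{not} an ideal of $\overline{R}$ comes from the finitely many choices of $q$ with $q < N$, and that each such $q$ admits only finitely many corresponding ideals. First I would recall the dichotomy already established in the paragraphs preceding the statement: for $I \in \calX_R$ with $I \neq R$, writing $q = \max \calS$, either $q \le N$ and then $I = \m_0 \m_1 \cdots \m_q$, or $q > N$ and then $B = I:I = \overline{R}$, which forces $I$ to be an ideal of $\overline{R}$. Hence the key observation is the contrapositive: if $I$ is \emph{not} an ideal of $\overline{R}$, then we must have $q \le N$, and moreover the case $I = R$ is trivially an ideal of $\overline{R}$ so it is excluded.

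The main step is then to bound the set $\{ I \in \calX_R \mid I \text{ is not an ideal of } \overline{R} \}$. I would argue that for each fixed $q$ with $0 \le q \le N$, the ideals $\m_0, \m_1, \ldots, \m_q$ are completely determined: indeed by Lemma \ref{6.1} and the recursive construction of the $A_i$, the chain $R = A_0 \subseteq A_1 \subseteq \cdots \subseteq A_N = \overline{R}$ and its maximal ideals $\m_0, \ldots, \m_N$ depend only on $R$ (and the finite generation of $\overline{R}$), not on the particular $I$. Therefore by Theorem \ref{6.2} the decomposition $I = \m_0 \m_1 \cdots \m_q$ shows that $I$ is uniquely determined by the single integer $q$. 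Since $q$ ranges over the finite set $\{0, 1, \ldots, N\}$, there are at most $N+1$ such ideals, and the displayed set is finite.

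The remaining care is to confirm that the dichotomy is exhaustive and that the non-ideal case genuinely lands in the bounded regime. I would verify that when $q \le N$ the ideal $I = \m_0 \m_1 \cdots \m_q$ need not be an ideal of $\overline{R}$ (so these are the relevant candidates), whereas the complementary family $q > N$ always yields ideals of $\overline{R}$ by the computation $I = \m_0 \cdots \m_{N-1} \m_N^{\,q+1-N} = a\overline{R}$ already carried out. Thus the map $I \mapsto q$ restricts to an injection from the set in question into $\{0, 1, \ldots, N\}$, giving finiteness.

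I expect the main obstacle to be purely bookkeeping rather than conceptual: the substantive content is already contained in the preceding paragraph's case analysis and in Theorem \ref{6.2}, so the proof is essentially an assembly of those facts. The one point requiring a line of justification is the claim that the chain $(A_i)_{i \ge 0}$ stabilizes to $\overline{R}$ at a finite stage $N$, which follows from $\overline{R} = \bigcup_{i \ge 0} A_i$ (Theorem \ref{2.7}) together with the hypothesis that $\overline{R}$ is a finitely generated, hence Noetherian, $R$-module; this guarantees both the existence of $N$ and that $\overline{R}$ is a DVR, anchoring the finiteness count.
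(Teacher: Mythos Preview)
Your proposal is correct and follows exactly the paper's approach: the paper does not give a separate proof for this corollary but derives it from the immediately preceding case analysis on $q \le N$ versus $q > N$, and your argument is a faithful (and slightly more explicit) reconstruction of that discussion. The only addition you make beyond the paper is spelling out why the chain $(A_i)$ stabilizes at some finite $N$ and why the $\m_i$ depend only on $R$, both of which the paper leaves implicit.
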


Closing this paper, let us note an example:

\begin{ex}\label{6.5}
Let $V=k[[t]]$ be the formal power-series ring over a field $k$, and let $R=k[[t^3, t^{11}, t^{13}]]$. Then, $R$ is an Arf ring, and
$$R=A_0\subsetneq A_1=k[[t^3, t^8, t^{10}]]\subsetneq A_2=k[[t^3, t^5, t^7]]\subsetneq A_3=k[[t^2, t^3]] \subsetneq A_4=V.$$
Let $\m_i$ be the maximal ideal of $A_i$, and let $\m=\m_0$. We set $I=\overline{t^6R}$. Then, $I=(t^6, t^{11}, t^{13})$, and we have
$$\sqrt{I}=\m=(t^3, t^{11}, t^{13}),$$
$$I_1=I:\m=\frac{I}{t^3}=_{R}\langle t^3, t^8, t^{10}\rangle= _{R_1}\langle t^3, t^8, t^{10}\rangle=\m_1=\sqrt{I_1},$$
$$R_1=\sqrt{I}:\sqrt{I}=\m:\m=A_1,$$
$$I_2=\m_1:\m_1=\sqrt{I_1}:\sqrt{I_1}=R_2=A_2,$$
where $_{R}\langle t^3, t^8, t^{10}\rangle$ denotes an $R$-module generated by $t^3, t^8, t^{10}$.
This means $q=1$. Because $I_2=R_2$, we also have 
$$R_2=R_3=\cdots =B~(=I:I),$$
and 
 $$I=(t^6, t^{11}, t^{13})=_{R}\langle t^3, t^{11}, t^{13}\rangle\cdot _{R}\langle t^3, t^8, t^{10}\rangle=_{R}\langle t^3, t^{11}, t^{13}\rangle\cdot _{R_1}\langle t^3, t^8, t^{10}\rangle=\m_0\m_1.$$
\end{ex}

\vspace{0.5em}

\begin{ac}
The author is grateful to Professor S. Goto for his helpful advice and useful comments.
\end{ac}



\end{document}